\documentclass[11pt,a4paper]{amsart}
\usepackage{mathrsfs}
\usepackage[T1]{fontenc}
\usepackage[utf8]{inputenc}
\usepackage{lmodern,microtype,mathtools,amssymb}
\usepackage[a4paper,margin=26mm]{geometry}
\usepackage{enumitem}
\usepackage[colorlinks=true,linkcolor=blue,citecolor=blue,urlcolor=blue]{hyperref}
\setlist[enumerate]{label=\textup{(\roman*)},leftmargin=2em}
\newcommand{\N}{\mathcal N(e)}
\newcommand{\Pow}{\mathcal P}
\DeclareMathOperator{\core}{core}
\DeclareMathOperator{\Stab}{Stab}
\DeclareMathOperator{\Sym}{Sym}
\DeclareMathOperator{\cf}{cf}
\DeclareMathOperator{\ib}{ib}
\theoremstyle{plain}
\newtheorem{theorem}{Theorem}[section]
\newtheorem{lemma}[theorem]{Lemma}
\newtheorem{definition}[theorem]{Definition}

\newtheorem{corollary}[theorem]{Corollary}
\theoremstyle{remark}

\numberwithin{equation}{section}
\title{The answer about Itzkowitz's Problems on FSIN groups}
\author{Dekui Peng}
\address{Institute of Mathematics, Nanjing Normal University,
Nanjing 210024, China}
\email{pengdk10@lzu.edu.cn}
\author{Li-Hong Xie}
\address{School of Mathematics and Computational Science, Wuyi University,
Jiangmen, Guangdong 529000, P.R. China}
\email{yunli198282@126.com}
\author{Jiang Yang}
\address{School of Mathematical Sciences, Guangxi Minzu University,
Nanning 530006, P.R. China}
\email{yangjjiangdy@126.com}
\thanks{Jiang Yang is the corresponding author.}

\subjclass[2020]{Primary 22A05; Secondary 54E15, 54A25, 03E10, 03E35}
\keywords{Functionally balanced group, SIN group, Itzkowitz problem,
  neutral subset, uniform continuity}

\begin{document}
\begin{abstract}
A topological group is functionally balanced if every bounded real-valued
left uniformly continuous function is right uniformly continuous.
We prove that every Hausdorff functionally balanced group has coinciding
left and right uniformities, giving an affirmative answer to the
Itzkowitz problem. Consequently, the classes of SIN, SFSIN and FSIN
groups coincide.
\end{abstract}
\maketitle

\section{Introduction}

All topological groups in this paper are Hausdorff. For a group $G$ with
identity $e$, let $\N$ denote its identity-neighbourhood filter.
The left and right uniformities are generated, respectively, by
\[
 \{(x,y):x^{-1}y\in U\}
 \quad\text{and}\quad
 \{(x,y):xy^{-1}\in U\},\qquad U\in\N.
\]
A group is \emph{balanced}, or \emph{SIN}, if these uniformities coincide.
Equivalently, every identity neighbourhood contains an identity
neighbourhood invariant under all inner automorphisms.
A group is \emph{functionally balanced}, or \emph{FSIN}, if every bounded
real-valued left uniformly continuous function is right uniformly
continuous. If the same holds without the boundedness restriction,
the group is \emph{strongly functionally balanced}, or \emph{SFSIN}.
Thus
\[
 [\mathrm{SIN}]\subseteq[\mathrm{SFSIN}]\subseteq[\mathrm{FSIN}].
\]
The comparison of the two uniformities through real-valued uniformly
continuous functions already appears in the work of Comfort and
Ross~\cite[p.~484]{CR66}. The Itzkowitz problem asks whether functional
balance implies balance; see Bouziad and Troallic~\cite{BT07} for its
bounded and unbounded versions.

The problem has affirmative answers under several additional hypotheses.
Milnes~\cite{Milnes90} treated locally compact groups, and
Protasov~\cite{Protasov91} proved the result for almost metrizable groups.
For locally connected groups, see Megrelishvili, Nickolas and
Pestov~\cite[Main Theorem]{MNP97}, together with the bounded FSIN
formulation in~\cite[Corollary 3.11]{BT04}.
Algebraic hypotheses also give positive results: Bareche and
Bouziad~\cite[Corollary 3.7]{BB10} proved that every FSIN group of
finite exponent is SIN.

A subset $A$ of $G$ is \emph{right neutral} if, for every $V\in\N$,
there is $U\in\N$ such that $AU\subseteq VA$.
It is \emph{left neutral} if, for every $V\in\N$,
there is $U\in\N$ such that $UA\subseteq AV$.
Inversion interchanges these two properties.
We use the following characterization.

\begin{theorem}[{\cite[Theorem 2.2]{BT04}}]\label{thm:neutral}
For a topological group $G$, the following conditions are equivalent:
\begin{enumerate}
\item $G$ is FSIN;
\item every subset of $G$ is right neutral;
\item every subset of $G$ is left neutral.
\end{enumerate}
\end{theorem}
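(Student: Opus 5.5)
The plan is to prove (ii)$\Leftrightarrow$(iii), then (iii)$\Rightarrow$(i) and (i)$\Rightarrow$(iii). The equivalence (ii)$\Leftrightarrow$(iii) is formal. If $UA\subseteq AV$, then applying inversion gives $A^{-1}U^{-1}\subseteq V^{-1}A^{-1}$. Since inversion maps $\N$ onto itself and $A\mapsto A^{-1}$ is a bijection of $\Pow(G)$, every subset is left neutral iff every subset is right neutral.

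\textbf{(iii)$\Rightarrow$(i).} Let $f$ be bounded and left uniformly continuous, and fix $\varepsilon>0$. Choose $V\in\N$ with $|f(x)-f(xv)|<\varepsilon$ for all $x\in G$ and $v\in V$. Boundedness lets us cover the range of $f$ by finitely many intervals of length $\varepsilon$. The preimages $A_1,\dots,A_n$ of these intervals partition $G$. For each $k$, left neutrality of $A_k$ gives $U_k\in\N$ with $U_kA_k\subseteq A_kV$. Put $U=\bigcap_k U_k$.

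Now take $x\in G$ and $u\in U$, with $x\in A_k$. Then $ux=av$ for some $a\in A_k$ and $v\in V$. Hence $|f(ux)-f(a)|<\varepsilon$, and $|f(a)-f(x)|\le\varepsilon$ because both lie in the same interval. So $|f(ux)-f(x)|<2\varepsilon$, which is right uniform continuity. This is the only place boundedness is used, since it makes the partition finite.

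\textbf{(i)$\Rightarrow$(iii).} Fix $A\subseteq G$ and $V\in\N$. By the Birkhoff--Kakutani construction there is a continuous left-invariant pseudometric $d$ on $G$, so $d(x,y)$ depends only on $x^{-1}y$, with $\{y:d(e,y)<1\}\subseteq V$. Define $f(x)=\min\{1,\inf_{a\in A}d(x,a)\}$. Then $f$ is bounded and $1$-Lipschitz for $d$, hence left uniformly continuous. By (i) it is right uniformly continuous, so there is $U\in\N$ with $|f(ux)-f(x)|<1/2$ for all $x$ and all $u\in U$.

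For $a\in A$ we have $f(a)=0$, so $f(ua)<1/2$. Thus some $a'\in A$ satisfies $d(ua,a')<1$, that is, $d(e,a'^{-1}ua)<1$. Hence $a'^{-1}ua\in V$, so $ua\in AV$. Therefore $UA\subseteq AV$.

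The main point needing care is the existence of a left-invariant continuous pseudometric adapted to an arbitrary $V$. I will quote this from the standard Birkhoff--Kakutani lemma rather than reprove it. Beyond that, the work is keeping the left/right conventions consistent, so that ``left uniformly continuous'' means control of $f(xv)$ and ``right'' means control of $f(ux)$.
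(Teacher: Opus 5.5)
Your proof is correct. For (ii)$\Leftrightarrow$(iii) you use the same inversion argument as the paper. The difference is in the link with FSIN: the paper gives no proof of that part at all. It cites the Protasov--Saryev criterion in the form (i)$\Leftrightarrow$(ii) from \cite{BT04}, while you prove (i)$\Leftrightarrow$(iii) directly, in two steps.
\begin{itemize}
\item For (i)$\Rightarrow$(iii), the bounded function $x\mapsto\min\{1,\inf_{a\in A}d(x,a)\}$ is Lipschitz for a left-invariant pseudometric, hence left uniformly continuous. Its right uniform continuity then yields $UA\subseteq AV$.
\item For (iii)$\Rightarrow$(i), you split $G$ into finitely many level sets of $f$. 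Left neutrality of these finitely many sets then gives right uniform continuity.
\end{itemize}

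Your route makes the paper self-contained at little cost. The pseudometric you quote from Birkhoff--Kakutani is exactly what Lemma~\ref{lem:metric} constructs, after rescaling $d$ by $1/\varepsilon$; that proof uses neither the FSIN property nor Hausdorffness. Your argument also lands directly on left neutrality, which is the form used in every later argument of the paper. It shows too that boundedness enters only through the finiteness of the partition in (iii)$\Rightarrow$(i), while the test function in (i)$\Rightarrow$(iii) is itself bounded. So the criterion a priori characterizes the bounded notion FSIN. The paper's citation, in turn, is shorter and credits the result to the literature.
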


The equivalence of the first two conditions is the Protasov--Saryev
criterion in the form given in~\cite{BT04}.
Applying right neutrality to $A^{-1}$ and taking inverses gives the
third condition, and conversely.
Whenever neutrality supplies a neighbourhood $U$, we may choose $U$
to be open, symmetric, and contained in any previously prescribed
identity neighbourhood. We make such choices directly below.

Let $\kappa$ be an infinite cardinal. A subset $A$ of $G$ is
\emph{left $\kappa$-precompact} if, for every $U\in\N$, there is
$F\subseteq G$ such that
\[
 |F|<\kappa\qquad\text{and}\qquad A\subseteq FU.
\]
Right $\kappa$-precompactness is defined using $A\subseteq UF$.
For $A=G$, the two conditions are equivalent by inversion;
we then simply say that $G$ is \emph{$\kappa$-precompact}.
In particular, $\aleph_0$-precompactness is ordinary precompactness.
A group $G$ is \emph{$\kappa$-narrow} if every $U\in\N$ admits
$G=FU$ with $F\subseteq G$ and $|F|\le\kappa$. Thus
\begin{equation}\label{eq:narrow}
 G\text{ is $\kappa^+$-precompact}
 \quad\Longleftrightarrow\quad
 G\text{ is $\kappa$-narrow}.
\end{equation}
This agrees with the usual narrowness convention; see
\cite[Section~5.1]{AT08}. The strict inequality in the definition of
$\kappa$-precompactness will be used throughout. A group is \emph{locally precompact} if it
has a precompact identity neighbourhood.

\begin{theorem}[{\cite[Corollary 3.7]{BT04}}]\label{thm:local}
Every locally precompact FSIN group is SIN.
\end{theorem}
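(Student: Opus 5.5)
Theorem~\ref{thm:local} is cited from \cite[Corollary 3.7]{BT04}, but I will sketch a self-contained proof from the neutrality criterion of Theorem~\ref{thm:neutral}. Let $G$ be FSIN and let $W\in\N$ be precompact. Fix an arbitrary $V\in\N$. The goal is a $U\in\N$ with $xUx^{-1}\subseteq V$ for all $x\in G$, since this gives SIN.

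\emph{Step 1: the precompact piece.} Choose a symmetric $V_1\in\N$ with $V_1^3\subseteq V$. Continuity of $(x,u)\mapsto xux^{-1}$ at $(x_0,e)$, together with a covering argument, handles $x$ near a finite set. Since $W$ is precompact, $W\subseteq FV_1$ for some finite $F$. For each $f\in F$ pick $U_f\in\N$ with $fU_ff^{-1}\subseteq V_1$, and let $U_0=\bigcap_f U_f\cap V_1$. Then $xU_0x^{-1}$ is not directly controlled for $x=fv$, $v\in V_1$, because $vU_0v^{-1}$ is not small. So compactness alone fails; this is where neutrality has to enter.

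\emph{Step 2: use neutrality of well-chosen sets.} Apply right neutrality (Theorem~\ref{thm:neutral}) to $A=W$ itself. For every $V'$ there is $U$ with $WU\subseteq V'W$. Then for $w\in W$ and $u\in U$ we have $wu=v'w'$ with $w'\in W$, so $wuw^{-1}=v'w'w^{-1}$. This is small only if $w'w^{-1}$ is small, which need not hold. To fix this, apply neutrality not to $W$ but to a family adapted to a fine cover. Write $W\subseteq \bigcup_{f\in F} fV_2$ with $V_2$ small, and apply neutrality to each $A_f=fV_2$ (finitely many sets). This yields $U$ with $A_fU\subseteq V_1A_f$ for all $f$. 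Then for $x=fv\in A_f$ we get $xUx^{-1}\subseteq V_1A_fA_f^{-1}=V_1fV_2^2f^{-1}$, and choosing $V_2$ with $fV_2^2f^{-1}\subseteq V_1$ for the finitely many $f$ makes this lie in $V_1^2\subseteq V$. The subtlety is that $F$ depends on $V_2$. I will break the circularity by first fixing $V_1$, then choosing $V_2$ with $V_2^2\subseteq V_1$ (not conjugated). I will then control $fV_2^2f^{-1}$ differently, via neutrality of the single set $W$ with respect to $V_2$. This gives a uniform $U_1$ with $WU_1\subseteq V_2 W$, which I will combine so that conjugation by any $w\in W$ maps $U_1$ into a fixed small set.

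\emph{Step 3: from $W$ to all of $G$.} Any $x\in G$ is an arbitrary element, and precompactness only covers $W$. For the global step I will use neutrality of arbitrary, non-precompact sets. Consider $A=\{x\}W$ for all $x$ simultaneously, or better a set $A$ of coset representatives of $G$ modulo $W$-translates, chosen so that $G=AW$. Right neutrality of $A$ gives $U$ with $AU\subseteq VA$. From $aU\subseteq VA$ and the discreteness-like separation of representatives, the plan is to deduce $aUa^{-1}\subseteq V\cdot(\text{small})$. Combining with Step 2 for the $W$-part gives $xUx^{-1}\subseteq V$ for $x=aw$.

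The main obstacle is Step 3, specifically extracting $aua^{-1}\in V$ from $au\in VA$: one needs $au=va'$ to force $a'=a$. I would first try choosing $A$ to be $W_0$-separated ($a'\notin W_0 a$ for $a'\neq a$, with $W_0^{3}\subseteq W$) via Zorn's lemma, which guarantees $G=W A$ up to adjusting sides. With $U,V\subseteq W_0$, the relation $au=va'$ forces $a'a^{-1}\in V aUa^{-1}$. I would then use local precompactness to control $aUa^{-1}$ inside a translate of $W$, and I expect this to be the hardest technical point.
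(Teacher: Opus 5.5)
The gap is in Step~3, and it is the whole theorem. Right neutrality of a separated net $A$ with $G=AW$ gives only this: for each $u$ in a small $U$, there is a permutation $\phi_u$ of $A$ with $au\in V\phi_u(a)$. To conclude $aua^{-1}\in V$ you need $\phi_u(a)=a$, and you propose to get this from a bound on $aUa^{-1}$ that is uniform in $a\in A$. But once conjugation by elements of $W$ is controlled (Step~2), writing $x=aw$ shows that such a uniform bound over the net $A$ is exactly the SIN condition, so the argument is circular.

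Even the coarse version you mention, $aUa^{-1}\subseteq W^2$ for all $a\in A$, would yield a conjugation-invariant open neighbourhood inside the precompact set $W^2$. Local precompactness cannot supply this. In the locally compact group of maps $x\mapsto sx+t$, conjugating the translation $x\mapsto x+t$ by the dilation $x\mapsto sx$ gives $x\mapsto x+st$, so conjugates of a small neighbourhood are unbounded. The bound would therefore have to come from FSIN. The only FSIN input in your plan is neutrality of one uniformly discrete set, and that says nothing about whether $\phi_u$ is trivial, so a genuinely new ingredient is missing.

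Note that the paper does not prove this statement; it quotes it from \cite[Corollary 3.7]{BT04} to dispose of the locally precompact case. The reason is that its own rigidifying device, marker sets as in Lemma~\ref{lem:markers} built from the large packings of Lemma~\ref{cor:gain}, needs infinite packings inside a fixed neighbourhood. A $Q$-disjoint subset of a precompact neighbourhood is finite, so that device is unavailable here.

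Two smaller remarks. Step~2 needs no neutrality and has no circularity if you cover on the other side. Since $W=W^{-1}$ is left precompact, $W\subseteq V_1F$ with $F$ finite and depending only on $V_1$. Put $U=\bigcap_{f\in F}f^{-1}V_1f$. Then for $w=vf$ you get $wUw^{-1}\subseteq V_1(fUf^{-1})V_1\subseteq V_1^3$.

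For the global step, one honest route is to pass to the Raikov completion $\hat G$, which is locally compact.
\begin{itemize}
\item $\hat G$ is again FSIN. Restrict a bounded left uniformly continuous $f$ on $\hat G$ to $G$, where it is right uniformly continuous. Then extend the estimate $|f(x)-f(y)|\le\varepsilon$, for $xy^{-1}$ in an open $U$, from $G$ to $\hat G$ by density and continuity.
\item The locally compact case (Milnes \cite{Milnes90}) then gives invariant neighbourhoods in $\hat G$, and their traces on $G$ are invariant.
\end{itemize}
This imports a nontrivial theorem. A self-contained proof would have to reproduce an argument of that strength, and your Step~3 currently contains no such argument.
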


We prove the following theorem.

\begin{theorem}\label{thm:main}
Every Hausdorff FSIN topological group is SIN.
\end{theorem}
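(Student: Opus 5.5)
Our plan is to argue by contradiction, using the neutrality characterization of Theorem~\ref{thm:neutral} together with the precompactness apparatus set up above, and to reduce to Theorem~\ref{thm:local}. Suppose $G$ is FSIN but not SIN. Then there is an open symmetric $V_0\in\N$ such that no $U\in\N$ satisfies $xUx^{-1}\subseteq V_0$ for all $x\in G$. Let $\kappa$ be the least cardinal such that $G$ is $\kappa$-precompact with respect to some fixed small neighbourhood $W\subseteq V_0$ with $W^3\subseteq V_0$. More precisely, $\kappa$ is the least cardinal for which $G=FW$ with $|F|<\kappa$. If $\kappa=\aleph_0$ (or if some identity neighbourhood is precompact), Theorem~\ref{thm:local} already gives a contradiction. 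So the interesting case is $\kappa$ uncountable, where $W$ itself is not precompact.

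The key step is to build a ``bad'' subset $A$ by transfinite recursion of length $\kappa$ (or $\cf\kappa$) and then show that $A$ fails to be right neutral for $V=W$. Enumerate a basis of candidate neighbourhoods $U$ only as far as needed. Instead of enumerating $\N$, which may be huge, we exploit the failure of SIN directly: for every $U\in\N$ there is $x_U$ with $x_UUx_U^{-1}\not\subseteq V_0$, i.e.\ there is $u\in U$ with $x_Uux_U^{-1}\notin V_0$. We choose points $a_\alpha$, $\alpha<\kappa$, that are pairwise $W$-separated ($a_\beta\notin a_\alpha W^{3}$ and symmetrically), which is possible since fewer than $\kappa$ translates of $W$ never cover $G$. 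We arrange that each $a_\alpha$ witnesses non-invariance for a neighbourhood from a suitable family. The aim is that, for each $U$, some $a\in A$ and $u\in U$ satisfy
\[
au\notin VA .
\]
Using separation, $au\in Va'$ forces $a'=a$ (up to the $W$-separation), and then $aua^{-1}\in V$, contradicting the choice of $a$ as a witness. This gives non-neutrality of $A$ and contradicts Theorem~\ref{thm:neutral}.

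The main obstacle is the bookkeeping. A single recursion of length $\kappa$ must handle \emph{every} $U\in\N$, while the character of $G$ may exceed $\kappa$. The separation argument also must rule out $au\in Va'$ for $a'\neq a$. To deal with the first problem, we would first try to show that failure of SIN is witnessed on a small family. By a pigeonhole argument over the cover $G=FW$, some translate $fW$ contains non-invariance witnesses for arbitrarily small $U$. We would then try to replace arbitrary $U$ by the quantifier structure in neutrality itself: to refute neutrality we need one $A$ that works against all $U$ simultaneously. So we want $A$ to contain, inside each cell of a maximal $W$-separated family, witnesses $x_U$ for all $U$. This is possible by taking $A$ to be a union over cells and using left neutrality (condition (iii)) to transfer the witnesses between cells. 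If this direct construction stalls, the fallback is to use the minimality of $\kappa$ through \eqref{eq:narrow}. That is, we would pass to the case where $G$ is $\lambda$-narrow with $\kappa=\lambda^+$, and prove an FSIN analogue of Theorem~\ref{thm:local} by induction on $\kappa$ (locally $\kappa$-precompact FSIN groups are SIN), handling the limit and successor cardinals separately.
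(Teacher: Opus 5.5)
\textbf{There is a genuine gap in your central step.} To refute right neutrality you need one $V$ such that for every $U$ there are $a\in A$ and $u\in U$ with $au\notin VA$. You get this from the claim that $au\in Va'$ forces $a'=a$, because $A$ is separated ($a_\beta\notin a_\alpha W^3$).

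That separation is measured by left translates $aW^3$. But $au\in Va'$ places $a'$ in $V^{-1}au$, a right translate of $V^{-1}$ by a point that is only left-close to $a$. To conclude $a^{-1}a'\in W^3$ you would need $a^{-1}Va$ to be small, uniformly in $a\in A$. That is a comparison of the left and right uniformities on $A$. It is exactly what is unavailable in a non-SIN group, and your construction of $A$ never secures it. So nothing prevents $au\in Va'$ for some $a'\neq a$.

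In the paper the corresponding step uses left neutrality $WC\subseteq CL$ of a $4r$-separated net $C$. There, separation and the containment $uc\in c'L$ are both measured by left translates, so $c'$ is at least unique. Even so, $c'=c$ does not follow; one only obtains a permutation $\varphi_u$ of $C$ (Lemma~\ref{Lepermutation}).

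The idea you are missing is the mechanism that forces these permutations to be trivial. The paper attaches to each $c\in C$ a marker $cA_{\tau(c)}$ inside a small packing $D$. Here $\tau$ is a distinguishing colouring for the group generated by the $\varphi_u$, and the sets $A_i$ are pairwise inequivalent under the inner permutations $\eta_h$. Left neutrality of the union of the markers makes every $\varphi_u$ ($u\in E$) colour-preserving, hence the identity. So $c^{-1}Ec\subseteq L$ for all $c\in C$, and $G=CB_{4r}$ gives $g^{-1}Eg\subseteq S^{-1}B_rS\subseteq O$ (Lemma~\ref{lem:markers}).

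\textbf{The second gap is the bookkeeping you flag yourself}, and none of your proposed remedies closes it. A recursion of length $\kappa$ cannot diagonalize against all $U\in\N$ when the character of $G$ exceeds $\kappa$. ``Transferring witnesses between cells by left neutrality'' invokes the FSIN hypothesis inside the construction of the set meant to violate it, with no stated mechanism. Your fallback, that locally $\kappa$-precompact FSIN groups are SIN by induction on $\kappa$, is the full theorem (every $G$ is $|G|^{+}$-precompact), and no inductive step is supplied.

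The paper never diagonalizes over $\N$. For a fixed $O$ it only needs at least $d_0=\mathrm D(\Gamma_0,C)$ inequivalent markers, and it gets them by cardinal comparisons:
\begin{enumerate}
\item $d_0\le\ib(\langle P\rangle)$ via double cosets (Lemma~\ref{lem:scale});
\item the dichotomy ``$P$ contains a packing of size at least $\beta$, or $d_0<\beta$,'' which uses Lemma~\ref{lem:generation} and the open-normal-core Theorem~\ref{thm:core};
\item the gain from Cantor's and K\"onig's theorems in Lemma~\ref{cor:gain}, which turns a packing of size $\mu$ into more than $\mu$ inequivalent subsets.
\end{enumerate}

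\textbf{Two smaller errors.} First, $G=FW$ with $F$ finite for a single $W$ does not give a precompact identity neighbourhood, so the case $\kappa=\aleph_0$ does not reduce to Theorem~\ref{thm:local}. Second, minimality of $\kappa$ for $W$ does not let you choose $\kappa$ points pairwise separated by $W^3$, since fewer than $\kappa$ translates of the larger set $W^3$ may already cover $G$.
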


In Section~\ref{sec:core}, we prove that open subgroups of an FSIN group
have open normal cores and establish a covering lemma for generated
subgroups. Section~\ref{sec:permutations} constructs local permutations
on separated subsets and families of subsets that distinguish them.
We then compare packing sizes with covering cardinals to complete the
proof. The covering lemma handles arbitrary cardinal cofinalities.

Unless otherwise stated, the topological group $G$ denotes a fixed Hausdorff FSIN group and $\N$ denotes the family of all open neighborhoods of the identity element $e$ in $G$ in
Sections~\ref{sec:permutations}. We write $\Pow(X)$ for the power set of $X$ and
$\core_G(H)=\bigcap_{g\in G}gHg^{-1}$ for the normal core of a subgroup
$H$ of $G$. Let $\Sym(X)$ denote the permutation group on the set $X$. All cardinal arguments are in ZFC.

\section{Definitions and preliminary results}
\label{sec:core}

We first show that every open subgroup of $G$ has an open normal core.
If a group $\Gamma$ acts on a set $X$ and $A\subseteq X$, we write
$\Stab_\Gamma(A)=\{\gamma\in\Gamma:\gamma A=A\}$.
For a faithful action $\Gamma\leq\Sym(X)$, a colouring $\tau:X\to I$
is \emph{distinguishing} if the identity is the only element $\gamma\in\Gamma$
such that $\tau(\gamma x)=\tau(x)$ for all $x\in X$.
We denote by $\mathrm D(\Gamma,X)$ the least cardinality of the set of
colours used by a distinguishing colouring.
The induced action on $\Pow(X)$ by a group $\Gamma$ acting on a set $X$ is given by $\gamma A=\{\gamma x:x\in A\}$ for each $A\subseteq X$,
and $\Pow(X)/\Gamma$ denotes its set of orbits.

\begin{lemma}\label{lem:stabilizer}
Let $H$ be an open subgroup of $G$. For every subset $A$ of $G/H$,
the subgroup $\Stab_G(A)$ is open in $G$.
\end{lemma}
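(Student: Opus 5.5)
Identify a subset $A\subseteq G/H$ with the union $\tilde A=\bigcup A\subseteq G$ of the corresponding left cosets $gH$; $\tilde A$ is right $H$-invariant, $\tilde A H=\tilde A$. The left action of $G$ on $G/H$ corresponds to left multiplication on $G$, so
\[
\Stab_G(A)=\{g\in G: g\tilde A=\tilde A\}.
\]
Since $\Stab_G(A)$ is a subgroup, it suffices to find $U\in\N$ with $U\subseteq\Stab_G(A)$. It is enough to show $U\tilde A\subseteq\tilde A$ for a symmetric $U$: then $U^{-1}\tilde A=U\tilde A\subseteq\tilde A$ as well, so $u\tilde A\subseteq\tilde A$ and $u^{-1}\tilde A\subseteq\tilde A$, i.e.\ $\tilde A\subseteq u\tilde A$, giving $u\tilde A=\tilde A$ for each $u\in U$.

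To get $U\tilde A\subseteq\tilde A$, I will use left neutrality from Theorem~\ref{thm:neutral}\,(iii), since $G$ is FSIN. Take $V=H\in\N$, the open subgroup itself. Left neutrality of $\tilde A$ gives a symmetric open $U\in\N$ with
\[
U\tilde A\subseteq \tilde A V=\tilde A H=\tilde A .
\]
Combined with the previous paragraph, $U\subseteq\Stab_G(A)$. Hence $\Stab_G(A)$ is a subgroup containing an identity neighbourhood, and is therefore open.

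No step here is a real obstacle. The only things to check are the side conventions: that $G/H$ means left cosets, so that $\tilde A$ is right $H$-invariant, and that the neutrality direction matches. Left neutrality ($UA\subseteq AV$) is exactly the form whose right-hand side is absorbed by right $H$-invariance. If right cosets $Hg$ were intended instead, I would apply right neutrality (ii) in the symmetric way, using $\tilde A U\subseteq H\tilde A=\tilde A$.
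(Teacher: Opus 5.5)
Your proposal is correct and follows essentially the same route as the paper's proof. Both apply left neutrality to the right-$H$-invariant set $\widetilde A$ with target $H$, obtaining a symmetric $U$ with $U\widetilde A\subseteq\widetilde A H=\widetilde A$, and then use symmetry of $U$ to conclude $u\widetilde A=\widetilde A$ for each $u\in U$.
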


\begin{proof}
Put $\widetilde A=\{g\in G:gH\in A\}$. Then
$\widetilde A H=\widetilde A$.
By the left neutrality of $\widetilde A$, we can choose a symmetric open
neighbourhood $U$ of $e$ such that
\[
  U\widetilde A\subseteq\widetilde A H=\widetilde A.
\]
For every $u\in U$, applying this inclusion to $u$ and $u^{-1}$ gives
$u\widetilde A=\widetilde A$. Hence $U\subseteq\Stab_G(A)$, and the
stabilizer is open.
\end{proof}

\begin{lemma}\label{lem:fibre}
Let $H$ and $Q$ be open subgroups of $G$, and put
$N=\core_G(H)$ and $X=G/H$. Suppose that $N\leq Q\leq H$.
If there is a distinguishing colouring for the action of $G/N$ on $X$
whose set of colours injects into $\Pow(H/Q)/H$, then $N$ is open in $G$.
\end{lemma}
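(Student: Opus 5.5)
The plan is to encode the distinguishing colouring as a single subset $A$ of $G/Q$. Then Lemma~\ref{lem:stabilizer}, applied to the open subgroup $Q$, shows that $\Stab_G(A)$ is open, and the distinguishing property forces $\Stab_G(A)\subseteq N$. Since $N$ would then contain an open subgroup, $N$ is open.

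\emph{Construction.} Fix a distinguishing colouring $\tau:X\to I$ for $G/N$ acting on $X=G/H$, and an injection $\iota:I\to\Pow(H/Q)/H$. For every $x\in X$ choose a representative $g_x\in G$ with $x=g_xH$, and choose $B_x\subseteq H/Q$ lying in the $H$-orbit $\iota(\tau(x))$. Because $Q\le H$, the fibre of the projection $G/Q\to G/H$ over $x$ is exactly $g_x(H/Q)=\{g_xhQ:h\in H\}$. These fibres are pairwise disjoint, and we put
\[
 A=\bigcup_{x\in X} g_xB_x\subseteq G/Q ,
\]
so that $A\cap g_x(H/Q)=g_xB_x$ for every $x\in X$.

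\emph{Key step.} Let $\gamma\in\Stab_G(A)$. Then $\gamma$ maps the fibre over $x$ onto the fibre over $\gamma x$. Hence
\[
 \gamma g_xB_x=\gamma\bigl(A\cap g_x(H/Q)\bigr)=A\cap g_{\gamma x}(H/Q)=g_{\gamma x}B_{\gamma x}.
\]
Put $h=g_{\gamma x}^{-1}\gamma g_x$. Since $\gamma g_xH=\gamma x=g_{\gamma x}H$, we have $h\in H$, and the display gives $B_{\gamma x}=hB_x$. So $B_{\gamma x}$ and $B_x$ lie in the same $H$-orbit, that is, $\iota(\tau(\gamma x))=\iota(\tau(x))$. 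Injectivity of $\iota$ then gives $\tau(\gamma x)=\tau(x)$ for all $x\in X$.

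\emph{Conclusion.} Since $N=\core_G(H)$ is precisely the kernel of the action of $G$ on $G/H$, the group $G/N$ acts faithfully on $X$, so the notion of a distinguishing colouring applies. The action of $\gamma N$ preserves $\tau$, so the distinguishing property gives $\gamma N=N$, i.e.\ $\gamma\in N$. Thus $\Stab_G(A)\le N$. By Lemma~\ref{lem:stabilizer}, with $Q$ in place of $H$, the subgroup $\Stab_G(A)$ is open. A subgroup containing an open subgroup is open, so $N$ is open.

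The only point requiring care is the bookkeeping that $A\cap g_x(H/Q)=g_xB_x$, which relies on the fibres over distinct $x$ being disjoint, together with the verification that $h\in H$. Both are short once the fibre description is in place. I do not expect a genuine obstacle, since all the topology is contained in Lemma~\ref{lem:stabilizer}.
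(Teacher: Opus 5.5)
Your proposal is correct and follows essentially the same route as the paper: you encode the colouring as one subset of $G/Q$ placed fibrewise over $G/H$, compare fibres to show that its stabilizer preserves the colouring, and invoke Lemma~\ref{lem:stabilizer} for $Q$. The only difference is at the end: the paper proves the equality $\Stab_G(B)=N$ using $N\le Q$, whereas you use only the inclusion $\Stab_G(A)\le N$ together with the fact that a subgroup containing an open subgroup is open, which shows that the hypothesis $N\le Q$ is not actually needed for this lemma.
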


\begin{proof}
Let $\tau:X\to I$ be such a colouring, where $I=\tau(X)$.
For each $i\in I$, choose $A_i\subseteq H/Q$ so that distinct colours
give distinct $H$-orbits. Choose $s(x)\in G$ with $s(x)H=x$ for each
$x\in X$, and put
\[
  B=\bigcup_{x\in X}s(x)A_{\tau(x)}\subseteq G/Q.
\]
Let $g\in\Stab_G(B)$. The natural projection $G/Q\to G/H$ commutes
with the action of $G$. Comparing the parts of $gB$ and $B$ in the fibre
over $gx$, we obtain
\[
  A_{\tau(gx)}=s(gx)^{-1}gs(x)A_{\tau(x)}.
\]
Since $s(gx)^{-1}gs(x)\in H$, the choice of the sets $A_i$ gives
$\tau(gx)=\tau(x)$ for every $x\in X$. The colouring is distinguishing
for $G/N$, so $g\in N$.

Conversely, $N$ is normal in $G$ and contained in $Q$. It therefore fixes
$G/Q$ pointwise. We have proved that $\Stab_G(B)=N$.
Lemma~\ref{lem:stabilizer}, applied to $Q$ and $B$, shows that $N$ is open.
\end{proof}

\begin{lemma}\label{lem:amplification}
Let $H$ and $Q$ be open subgroups of $G$, and put $N=\core_G(H)$.
Suppose that $N\leq Q\leq H$ and that $\nu=[H:Q]$ is infinite.
If $\mathrm D(G/N,G/H)\leq\nu$, then $N$ is open in $G$.
\end{lemma}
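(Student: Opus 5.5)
The plan is to reduce to Lemma~\ref{lem:fibre}. It then suffices to show that the number of $H$-orbits on $\Pow(H/Q)$ is at least $\nu$. Take a distinguishing colouring $\tau:X\to I$ for the action of $G/N$ on $X=G/H$ with $|I|\le\nu$. If there is an injection $I\hookrightarrow\Pow(H/Q)/H$, Lemma~\ref{lem:fibre} applies directly and gives that $N$ is open. So everything rests on the cardinal inequality $|\Pow(H/Q)/H|\ge\nu$.

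To prove it, I would count orbits through an invariant preserved by the $H$-action on $Y=H/Q$. Since $H$ permutes $Y$ bijectively, the map $A\mapsto |A|$ is constant on $H$-orbits in $\Pow(Y)$. The subsets of $Y$ of cardinality $\lambda$ exist for every cardinal $\lambda\le\nu=|Y|$, so the number of orbits is at least the number of cardinals $\le\nu$. This count is too small in general. A finer invariant uses the pair $(|A|,|Y\setminus A|)$, but it is still only a cardinal count.

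The main obstacle is therefore to produce $\nu$ distinct orbits when $\nu$ is large, for instance $\nu=\aleph_1$. In that case the cardinality invariants give only countably many orbits. The first thing I would try is a cardinality argument on the orbits themselves. Each orbit is the image of $H$, or more precisely of $H/\core_H(Q)$ acting on $Y$. Each orbit has size at most $|\Sym(Y)|$, while $|\Pow(Y)|=2^\nu$, but this does not directly bound the number of orbits from below.

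I would instead use the structure of $N$. The core $\core_H(Q)$ is the kernel of the action of $H$ on $Y$, and it contains $N$. The point is that this orbit counting may only need a weaker target. One could replace $Y$ by a subset $Y_0\subseteq Y$ with $|Y_0|=\nu$ and use subsets $A\subseteq Y_0$ that are "rigid" enough to separate orbits.

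A cleaner route avoids orbit counting entirely. Partition $Y$ into $\nu$ pieces indexed by colours, and encode $\tau(x)$ into $B$ together with a marker set. Choose a fixed subset $M\subseteq H/Q$ with nontrivial structure, and let the colour $i$ be recorded by the position of a single extra point relative to $M$. Then replace Lemma~\ref{lem:fibre} by a direct version of its stabilizer computation.

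I expect this coding step, which compensates for the fact that $H$ can move any point of $Y$, to be the hard part. My fallback there is a transfinite induction on $\nu$: handle the case $\nu=\aleph_0$ by finite cardinalities, which already give $\aleph_0$ orbits, and reduce larger $\nu$ by iterating Lemma~\ref{lem:stabilizer}.
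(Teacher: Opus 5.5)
There is a genuine gap. You reduce everything to applying Lemma~\ref{lem:fibre} with the \emph{given} $Q$, via the inequality $|\Pow(H/Q)/H|\ge\nu$, or even just $|\Pow(H/Q)/H|\ge\mathrm D(G/N,G/H)$. Neither inequality holds in general under the hypotheses of the lemma.

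Here is a counterexample. Take $G=\Sym(Z)$ discrete with $|Z|=\aleph_1$. Let $H$ be the stabilizer of a point $z_0$, and let $Q$ be the pointwise stabilizer of $z_0,z_1$.
\begin{enumerate}
\item $N=\core_G(H)=\{e\}\le Q$ and $\nu=[H:Q]=\aleph_1$.
\item $\mathrm D(G,G/H)=\mathrm D(\Sym(Z),Z)=\aleph_1\le\nu$, since two points of the same colour are swapped by a colour-preserving transposition.
\item $H$ acts on $Y=H/Q\cong Z\setminus\{z_0\}$ as the full symmetric group. Its orbits on $\Pow(Y)$ are therefore classified by the pair $(|A|,|Y\setminus A|)$, and there are only countably many of them.
\end{enumerate}
So the obstacle you noticed is not a weakness of cardinality invariants. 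It is a real obstruction: no way of encoding $\mathrm D(G/N,G/H)$ colours into distinct $H$-orbits of subsets of $H/Q$ can succeed. Your fallbacks (marker coding, transfinite induction, ``iterating Lemma~\ref{lem:stabilizer}'') are too vague to get around this. Only the case $\nu=\aleph_0$ is actually settled by your argument.

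The missing idea is to change the fibre, using Lemma~\ref{lem:stabilizer} a second time. Put $\lambda=2^\nu$ and $K=\core_H(Q)$. Then $N\le K$ and $|H/K|\le|\Sym(Y)|=\lambda$. There are two cases.
\begin{enumerate}
\item Every $H$-orbit on $\Pow(Y)$ has size $<\lambda$. Since $|\Pow(Y)|=\lambda$, there are at least $\cf(2^\nu)>\nu$ orbits by K\"onig's theorem. Lemma~\ref{lem:fibre} then applies with $Q$.
\item Some $A\subseteq Y$ has an orbit of size $\lambda$. Regard $A$ as a subset of the fibre $H/Q\subseteq G/Q$; projecting to $G/H$ shows $\Stab_G(A)=\Stab_H(A)=:R$. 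By Lemma~\ref{lem:stabilizer}, $R$ is open. Moreover $N\le K\le\core_H(R)\le R\le H$, $[H:R]=\lambda$, and $|H/\core_H(R)|\le\lambda$. Hence every $H$-orbit on $\Pow(H/R)$ has size at most $\lambda$, so there are $2^\lambda>\nu$ orbits. Lemma~\ref{lem:fibre} then applies with $R$ in place of $Q$.
\end{enumerate}
In the example above it is exactly the second case that occurs: take $A$ with $|A|=|Y\setminus A|=\aleph_1$. This passage to the finer coset space $H/R$ is the step your proposal lacks.
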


\begin{proof}
Put $Y=H/Q$, $K=\core_H(Q)$ and $\lambda=2^\nu$.
Since $N$ is normal in $H$ and contained in $Q$, we have $N\leq K$.
The faithful action of $H/K$ on $Y$ gives
\[
  |H/K|\leq|\Sym(Y)|=2^\nu=\lambda.
\]
We consider the orbits of $H$ on $\Pow(Y)$.

Suppose first that some $A\subseteq Y$ has an orbit of cardinality
$\lambda$, and put $R=\Stab_H(A)$. Regard $A$ as a subset of the fibre
$H/Q\subseteq G/Q$. Since $\lambda>1$, the set $A$ is nonempty.
If $gA=A$, the projection to $G/H$ gives $gH=H$. Thus
$\Stab_G(A)=R$, and Lemma~\ref{lem:stabilizer} shows that $R$ is open.
The subgroup $K$ fixes $Y$ pointwise and is normal in $H$. Consequently,
\[
  N\leq K\leq\core_H(R)\leq R\leq H,
  \qquad [H:R]=\lambda,
  \qquad |H/\core_H(R)|\leq\lambda.
\]
Every orbit of $H$ on $\Pow(H/R)$ therefore has cardinality at most
$\lambda$. There are $2^\lambda$ such orbits. Indeed, if their number
were $p<2^\lambda$, then their union would have cardinality at most
$\max\{p,\lambda\}<2^\lambda$, contrary to
$|\Pow(H/R)|=2^\lambda$.
Since $\mathrm D(G/N,G/H)\leq\nu<2^\lambda$,
Lemma~\ref{lem:fibre}, with $R$ in place of $Q$, shows that $N$ is open.

Suppose now that every $H$-orbit on $\Pow(Y)$ has cardinality below
$\lambda$. Fewer than $\cf(\lambda)$ such orbits have a union of
cardinality below $\lambda$. As $|\Pow(Y)|=\lambda$, it follows that
\[
  |\Pow(Y)/H|\geq\cf(2^\nu)>\nu.
\]
The last inequality is a consequence of K\"onig's theorem; see
\cite[Corollary~5.12]{Jech03}. Indeed, suppose that
$\delta=\cf(2^\nu)\leq\nu$.
K\"onig's theorem gives $(2^\nu)^\delta>2^\nu$, whereas
\[
  (2^\nu)^\delta=2^{\nu\cdot\delta}=2^\nu,
\]
a contradiction. Lemma~\ref{lem:fibre}, applied to $Q$, again shows
that $N$ is open.
\end{proof}

\begin{theorem}\label{thm:core}
For every open subgroup $H$ of $G$, the normal core $\core_G(H)$ is open.
\end{theorem}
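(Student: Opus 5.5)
The plan is to reduce to Lemma~\ref{lem:amplification}, using the trivial bound on distinguishing numbers. Put $N=\core_G(H)$, $X=G/H$ and $\mu=|X|=[G:H]$. If $\mu$ is finite, then $N$ is the intersection of the finitely many conjugates $gHg^{-1}$, each of which is open, so $N$ is open. Assume from now on that $\mu$ is infinite. The group $G/N$ acts faithfully on $X$, so colouring each point of $X$ by itself is distinguishing. Hence
\[
\mathrm D(G/N,G/H)\le\mu.
\]
Therefore, by Lemma~\ref{lem:amplification}, it suffices to find an open subgroup $Q$ with
\[
N\le Q\le H\qquad\text{and}\qquad [H:Q]\ge\mu .
\]

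The natural candidates for $Q$ are intersections of $H$ with conjugates of $H$. For $g\in G$, the subgroup $Q_g=H\cap gHg^{-1}$ is open and contains $N$. Its index $[H:Q_g]$ equals the size of the $H$-orbit of $gH$ in $X$. So if some $H$-orbit on $X$ has size at least $\mu$, we are done. In the remaining case, all $H$-orbits on $X$ have size $<\mu$, and I would try to enlarge the index by combining several such $Q_g$. For a set $S\subseteq G$, put
\[
Q_S=H\cap\bigcap_{g\in S}gHg^{-1}.
\]
This need not be open for infinite $S$. However, $Q_S$ is exactly the pointwise stabilizer in $H$ of the set $\{gH:g\in S\}$, and I would try to show it is open using Lemma~\ref{lem:stabilizer}. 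The idea is to encode the points $gH$, $g\in S$, by a subset $B\subseteq G/H'$ for a suitable finite intersection $H'$, arranged so that any element stabilizing $B$ must fix each $gH$. This is the same fibre-coding trick as in Lemma~\ref{lem:fibre}, where distinct points are marked by pairwise non-equivalent subsets of the fibres.

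As a fallback organisation, I would argue by contradiction with a minimal counterexample. Among all open subgroups with non-open core, choose $H$ with $\mu=[G:H]$ minimal. For any open $Q$ with $N\le Q\le H$, the core $\core_G(Q)\le N$ is also non-open. The group $H$ is itself FSIN, because it is an open subgroup and neutrality of subsets passes to it. Hence, if $[H:Q]<\mu$, minimality applied inside $H$ gives that $\core_H(Q)$ is open. Every conjugate $gHg^{-1}$ then contains $g\core_H(Q)g^{-1}$. One would then hope to assemble $N$ from fewer than $\mu$ such pieces, or else to obtain an orbit of size $\mu$ and return to the first case.

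The main obstacle is precisely the step of producing an open $Q$ between $N$ and $H$ of index at least $\mu$ when all $H$-orbits on $G/H$ are small. This is where the cofinality of $\mu$ may intervene; the introduction mentions a covering lemma handling arbitrary cofinalities. My first attempt would be the coding argument via Lemma~\ref{lem:stabilizer} described above, applied to a set $S$ of $\mu$ representatives chosen from distinct $H$-orbits.
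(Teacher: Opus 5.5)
\textbf{The proposal has a genuine gap: the reduction target cannot be reached in general.} Any $Q$ with $N\le Q\le H$ satisfies $[H:Q]\le[H:N]$, and $[H:N]$ can be far smaller than $\mu=[G:H]$.

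For example, if $H$ is normal of infinite index, then $N=H$ and every $H$-orbit on $G/H$ is a singleton. The only admissible $Q$ is then $H$ itself, with index $1$. A non-normal example is $G=K\times\Delta$ with $K$ SIN, $H=H_0\times\{e\}$, and $\Delta$ discrete with $|\Delta|>[H_0:\core_K(H_0)]$.

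This is exactly the ``all $H$-orbits small'' case you identify as the obstacle. No coding argument can produce an open $Q$ of index $\ge\mu$ there, because the trivial bound $\mathrm D(G/N,G/H)\le\mu$ is too weak to start from.

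The minimal-counterexample fallback does not close the gap either. The step ``assemble $N$ from fewer than $\mu$ pieces'' is never carried out. Also, for the application inside $H$ to be legitimate, the minimality would have to range over all FSIN groups, not only over open subgroups of the fixed $G$.

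\textbf{The missing idea is to bound the distinguishing number by orbit sizes rather than by $\mu$.} Put $\kappa=\sup_{x\in X}|Hx|$. Give the point $H\in X$ a colour used nowhere else, and colour every other $H$-orbit injectively from one common palette of $\kappa$ colours. A colour-preserving $g\in G$ must fix the point $H$, so $g\in H$. Then $g$ preserves each $H$-orbit and, by injectivity, fixes every point, so $g\in N$. Hence $\mathrm D(G/N,X)\le\kappa+1$, and you only need $[H:Q]\ge\kappa$.

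That bound follows from the observation you already had, with no fibre coding. Take a transversal $T$ of the $H$-orbits on $X$ and put $Q=H\cap\Stab_G(T)$, which is open by Lemma~\ref{lem:stabilizer}. An $h\in H$ with $hT=T$ sends each $t\in T$ into $T\cap Ht=\{t\}$. So $Q=\bigcap_{t\in T}H_t\supseteq N$, and $[H:Q]\ge|Ht|$ for every $t$, i.e.\ $\nu=[H:Q]\ge\kappa$.

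The argument then splits on $\nu$:
\begin{itemize}
\item If $\nu$ is infinite, then $\kappa+1\le\nu$ and Lemma~\ref{lem:amplification} applies.
\item If $\nu=m$ is finite, apply Lemma~\ref{lem:fibre} directly: subsets of $H/Q$ of sizes $0,\dots,m$ lie in $m+1\ge\kappa+1$ distinct $H$-orbits. This case can occur even when $\mu$ is infinite, so your finite-$\mu$ case does not cover it.
\end{itemize}
The cofinality concern you anticipated does not arise at this level; König's theorem handles it inside Lemma~\ref{lem:amplification}.
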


\begin{proof}
Put $X=G/H$ and $N=\core_G(H)$. Choose a set $T\subseteq X$ containing
exactly one point from each $H$-orbit. By Lemma~\ref{lem:stabilizer},
the subgroup
\[
  Q=H\cap\Stab_G(T)
\]
is open. Since $N$ fixes $X$ pointwise, we have $N\leq Q$.
An element of $H$ preserving $T$ fixes its unique point in each
$H$-orbit. Therefore
\[
  Q=\bigcap_{t\in T}H_t,
  \qquad H_t=\{h\in H:ht=t\}.
\]
Put
\[
  \kappa=\sup_{x\in X}|Hx|,
  \qquad \nu=[H:Q],
  \qquad d_*=\mathrm D(G/N,X).
\]
For every $t\in T$, the inclusion $Q\leq H_t$ gives
$|Ht|=[H:H_t]\leq\nu$. Since $T$ meets every $H$-orbit, we obtain
$\kappa\leq\nu$.

Give the point $H\in X$ a colour used nowhere else. Colour each other
$H$-orbit injectively, using the same set of $\kappa$ colours for all
these orbits. An element of $G$ preserving this colouring fixes $H$
and hence belongs to $H$. It then fixes every point of each $H$-orbit,
so it belongs to $N$. Thus the colouring is distinguishing for $G/N$,
and $d_*\leq\kappa+1$.

If $\nu$ is infinite, then $d_*\leq\nu$, and
Lemma~\ref{lem:amplification} applies. If $\nu=m<\omega$, then
$d_*\leq m+1$. Subsets of the $m$-element set $H/Q$ with different
cardinalities belong to different $H$-orbits. Hence
\[
  |\Pow(H/Q)/H|\geq m+1\geq d_*.
\]
In this case Lemma~\ref{lem:fibre} applies. Therefore $N$ is open in
both cases.
\end{proof}

For a topological group $K$, we denote by $\ib(K)$ the least
infinite cardinal $\kappa$ for which $K$ is $\kappa$-narrow. Thus
$\ib(K)\leq\kappa$ if and only if $K$ is
$\kappa^+$-precompact. We first record the covering properties used in
this section.

\begin{lemma}\label{lem:permanence}
Let $G$ be a topological group and $\kappa$ an infinite cardinal. Left $\kappa^+$-precompact subsets
of $G$ are closed under left and right translations,
finite products, and countable unions. If $P=P^{-1}\in\N$ is left
$\kappa^+$-precompact, then $\langle P\rangle$ is $\kappa$-narrow.
\end{lemma}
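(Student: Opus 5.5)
The plan is to verify each closure property directly from the definition, with $\kappa^+$-precompactness meaning that for each $U\in\N$ the set is covered by at most $\kappa$ left translates $FU$. Left translation is immediate: if $A\subseteq FU$ then $gA\subseteq (gF)U$. For right translation, given $U$, we put $V=gUg^{-1}\in\N$ and choose $F$ with $|F|\le\kappa$ and $A\subseteq FV$. Then
\[
 Ag\subseteq FVg=FgU=(Fg)U .
\]
For countable unions $A=\bigcup_n A_n$, we pick $F_n$ for each $n$ and set $F=\bigcup_n F_n$. Since $\aleph_0\cdot\kappa=\kappa$, we still have $|F|\le\kappa$.

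Products $AB$ are the only real step. Given $U$, we choose $V\in\N$ with $V^2\subseteq U$, and then $F_B$ with $|F_B|\le\kappa$ and $B\subseteq F_BV$. For each $b\in F_B$ we put $W_b=bVb^{-1}\in\N$ and take $F_b$ with $|F_b|\le\kappa$ and $A\subseteq F_bW_b$. For $a\in A$, $c\in B$, we write $c=bv$ with $b\in F_B$, $v\in V$, and $a=fbv'b^{-1}$ with $f\in F_b$, $v'\in V$. This gives
\[
 ac=fbv'v\in fbV^2\subseteq (fb)U .
\]
Hence $AB\subseteq FU$ with $F=\{fb:b\in F_B,\ f\in F_b\}$, and $|F|\le\kappa\cdot\kappa=\kappa$. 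Finite products then follow by induction. The only point requiring care is that the conjugate neighbourhood $W_b$ depends on $b$; the $\kappa$-indexed union absorbs this dependence.

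For the last assertion, let $P=P^{-1}$ be left $\kappa^+$-precompact. By the product property each $P^n$ is left $\kappa^+$-precompact, and by the countable-union property so is $\langle P\rangle=\bigcup_n P^n$, where symmetry of $P$ ensures this union is the generated subgroup. Since $P$ is an identity neighbourhood, $\langle P\rangle$ is an open subgroup, so its identity neighbourhoods are exactly the members of $\N$ that lie inside it. Given such a $U$, we get $\langle P\rangle\subseteq FU$ with $F\subseteq G$ and $|F|\le\kappa$.

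To obtain $\kappa$-narrowness of $\langle P\rangle$ itself, we need the centres to lie in $\langle P\rangle$. We discard every $f\in F$ with $f U\cap\langle P\rangle=\emptyset$. For each remaining $f$, we have $fU\cap\langle P\rangle\ne\emptyset$, and since $U\subseteq\langle P\rangle$ this forces $f\in\langle P\rangle U^{-1}=\langle P\rangle$. The resulting $F'\subseteq\langle P\rangle$ satisfies $\langle P\rangle=F'U$ with $|F'|\le\kappa$, so $\langle P\rangle$ is $\kappa$-narrow, and $\ib(\langle P\rangle)\le\kappa$ as claimed.
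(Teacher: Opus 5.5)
Your proof is correct and follows the paper's argument essentially step for step. Right translates and products are handled by covering with conjugated neighbourhoods ($gUg^{-1}$, respectively $bVb^{-1}$ for each centre $b$ of a cover of $B$), and countable unions by taking the union of the centre sets. The $\kappa$-narrowness of the open subgroup $\langle P\rangle=\bigcup_n P^n$ then comes from discarding every centre whose translate misses $\langle P\rangle$.
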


\begin{proof}
Left translations change only the covering centres. For a right
translate $Ax$ and a target $U\in\N$, cover $A$ by at most $\kappa$
left translates of $xUx^{-1}$ and multiply on the right by $x$.

Let $A$ and $B$ be left $\kappa^+$-precompact, and fix $U\in\N$.
Choose a symmetric open neighbourhood $V$ with $V^2\subseteq U$ and a
cover $B\subseteq EV$, where $|E|\leq\kappa$. For every $x\in E$, choose
$F_x\subseteq G$ such that
\[
 |F_x|\leq\kappa\qquad\text{and}\qquad
 A\subseteq F_x(xVx^{-1}).
\]
Then
\[
 AB\subseteq\bigcup_{x\in E}F_xxV^2
       \subseteq\left(\bigcup_{x\in E}F_xx\right)U.
\]
The set of centres has cardinal at most $\kappa$. Induction gives the
assertion for finite products. Countable unions are handled by taking
the union of the corresponding sets of centres.

Put $K=\langle P\rangle$. Since $P$ is symmetric and contains $e$,
\[
 K=\bigcup_{n\geq 1}P^n.
\]
Hence $K$ is left $\kappa^+$-precompact in $G$. The subgroup $K$ is open.
For a target neighbourhood contained in $K$, every covering translate
that meets $K$ has its centre in $K$. Therefore $K$ is $\kappa$-narrow.
\end{proof}

For a singular cardinal $\beta$ of countable cofinality, a countable
union of left $\beta$-precompact sets need not be left
$\beta$-precompact. We show that a subgroup generated by a symmetric
left $\beta$-precompact neighbourhood still has this property when
$\beta>\aleph_0$. The proof uses one cover of $P^2$ by translates of
$P$.

\begin{lemma}\label{lem:product}
Let $\beta$ be an infinite cardinal. In an FSIN group, finite products
of left $\beta$-precompact sets are left $\beta$-precompact.
\end{lemma}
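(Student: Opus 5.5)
The plan is to avoid the conjugated neighbourhoods $xVx^{-1}$ that appear in the proof of Lemma~\ref{lem:permanence}. Those conjugates force a separate family of centres $F_x$ for each $x\in E$, and when $\beta$ is singular a union of fewer than $\beta$ sets, each of size below $\beta$, can have size $\beta$. In an FSIN group we can instead use neutrality from Theorem~\ref{thm:neutral} to move a neighbourhood across $B$. Then a single cover of $A$ and a single cover of $B$ suffice, and the new set of centres is simply $FE$.

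Concretely, let $A$ and $B$ be left $\beta$-precompact and fix $U\in\N$.
\begin{enumerate}
\item Choose a symmetric open $W\in\N$ with $W^2\subseteq U$.
\item Cover $B\subseteq EW$ with $|E|<\beta$.
\item By Theorem~\ref{thm:neutral}, $B$ is left neutral. So there is $V\in\N$ with $VB\subseteq BW$.
\item Cover $A\subseteq FV$ with $|F|<\beta$.
\end{enumerate}
Then
\[
 AB\subseteq FVB\subseteq FBW\subseteq FEW^2\subseteq (FE)U .
\]
It remains to note that $|FE|\leq|F|\cdot|E|<\beta$. If both $F$ and $E$ are finite, the product is finite. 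Otherwise $|F|\cdot|E|=\max\{|F|,|E|\}<\beta$. Hence $AB$ is left $\beta$-precompact.

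The general statement for finite products then follows by induction on the number of factors, applying the two-factor case to $(A_1\cdots A_{n-1})A_n$. There is no genuine obstacle once left neutrality is used. The only point needing care is the order of choices: $W$ is fixed first, $E$ is chosen for $W$, the neutrality neighbourhood $V$ is obtained from $B$ and $W$, and only then is $A$ covered by translates of $V$. This order is what keeps the centre set a single product $FE$ rather than a $\beta$-indexed union.
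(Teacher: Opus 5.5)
Your proposal is correct and follows essentially the same route as the paper: left neutrality of $B$ (from Theorem~\ref{thm:neutral}) gives $AB\subseteq FVB\subseteq FBW\subseteq FEW^2\subseteq FEU$ from a single cover of $A$ and a single cover of $B$, so the centre set $FE$ has cardinality below $\beta$, and induction handles longer products. The only difference is notational: your $V$ and $W$ play the roles of the paper's $W$ and $V$.
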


\begin{proof}
Let $A$ and $B$ be left $\beta$-precompact, and fix $U\in\N$.
Choose a symmetric open neighbourhood $V$ with $V^2\subseteq U$.
By left neutrality, there is a symmetric open neighbourhood $W$ such
that $WB\subseteq BV$. Choose covers
\[
 A\subseteq FW,\qquad B\subseteq EV,
 \qquad |F|,|E|<\beta.
\]
Then
\[
 AB\subseteq FWB\subseteq FBV\subseteq FEV^2\subseteq FEU.
\]
The product of two cardinals less than an infinite cardinal $\beta$
is again less than $\beta$. Thus $AB$ is left $\beta$-precompact, and
induction completes the proof.
\end{proof}

\begin{lemma}\label{lem:generation}
Let $\beta>\aleph_0$ be a cardinal, and let $P=P^{-1}\in\N$ in an
FSIN group. If $P$ is left $\beta$-precompact, then $\langle P\rangle$
is left $\beta$-precompact.
\end{lemma}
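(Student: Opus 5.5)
The plan is to cover $\langle P\rangle$ by one set of fewer than $\beta$ left translates of $P$, using a single cover of $P^2$ as the hint before Lemma~\ref{lem:product} suggests. This avoids the countable union $\bigcup_n P^n$, which can fail when $\beta$ is singular of countable cofinality.

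\emph{Step 1.} By Lemma~\ref{lem:product}, the set $P^2$ is left $\beta$-precompact. Since $P\in\N$, we may apply this with the target neighbourhood $P$ itself. This gives $F\subseteq G$ with $|F|<\beta$ and
\[
 P^2\subseteq FP .
\]

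\emph{Step 2.} We show by induction that $P^{n}\subseteq F^{n-1}P$ for every $n\geq 1$. The case $n=1$ is trivial. If $P^n\subseteq F^{n-1}P$, then
\[
 P^{n+1}=P^nP\subseteq F^{n-1}P^2\subseteq F^{n-1}FP=F^nP .
\]
Since $P=P^{-1}$ and $e\in P$, we have $\langle P\rangle=\bigcup_{n\ge1}P^n$, and therefore $\langle P\rangle\subseteq\langle F\rangle P$.

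\emph{Step 3.} This is the cardinal count, and it is the only place where $\beta>\aleph_0$ is used. Put $L=\langle F\rangle$. If $F$ is infinite, then $|L|=|F|<\beta$. If $F$ is finite, then $|L|\le\aleph_0<\beta$. In either case $|L|<\beta$.

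\emph{Step 4.} Fix $U\in\N$. Since $P$ is left $\beta$-precompact, there is $E$ with $|E|<\beta$ and $P\subseteq EU$. Then
\[
 \langle P\rangle\subseteq LP\subseteq (LE)U .
\]
The set $LE$ has cardinality at most $|L|\cdot|E|$, and the product of two cardinals below the infinite cardinal $\beta$ is below $\beta$. Hence $|LE|<\beta$, and $\langle P\rangle$ is left $\beta$-precompact.

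The main obstacle is the one the argument is built to avoid: replacing the countable union $\bigcup_n P^n$ by the single set $\langle F\rangle P$ of translates. Once Step 1 provides the one cover of $P^2$, the remaining steps are routine.
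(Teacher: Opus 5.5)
Your proof is correct and is essentially the paper's argument. Both use a single cover $P^2\subseteq FP$ from Lemma~\ref{lem:product}, the induction $P^n\subseteq F^{n-1}P$, and a fixed set of centres of size at most $\max\{\aleph_0,|F|\}<\beta$ composed with a cover of $P$ for each target $U$; the only differences are cosmetic (you use $\langle F\rangle$ where the paper uses $\bigcup_n F^n$, and you omit the paper's unneeded step of trimming $F$ to lie inside $\langle P\rangle$).
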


\begin{proof}
Put $K=\langle P\rangle$. By Lemma~\ref{lem:product}, $P^2$ is left
$\beta$-precompact. Applying this at the target $P$, choose
\[
 P^2\subseteq FP,\qquad |F|<\beta.
\]
We discard all $f\in F$ for which $fP\cap P^2=\varnothing$. For every
remaining $f$, there is $p\in P$ with $fp\in P^2$. Thus
$f\in P^2P^{-1}\subseteq K$.

We have $P^n\subseteq F^{n-1}P$ for all $n\geq 1$, where
$F^0=\{e\}$. Indeed, the assertion holds for $n=1$, and
\[
 P^{n+1}\subseteq F^{n-1}P^2\subseteq F^nP
\]
gives the induction step. Put
\[
 F^*=\bigcup_{n<\omega}F^n.
\]
Since $P$ is symmetric and contains $e$, it follows that
\[
 K=\bigcup_{n\geq 1}P^n\subseteq F^*P,
 \qquad |F^*|\leq\max\{\aleph_0,|F|\}<\beta.
\]
Now fix $U\in\N$. Choose $D_U\subseteq G$ with
$|D_U|<\beta$ and $P\subseteq D_UU$. Then
\[
 K\subseteq F^*D_UU,\qquad |F^*D_U|<\beta.
\]
This proves that $K$ is left $\beta$-precompact.
\end{proof}

The set $F^*$ in this proof is fixed before $U$ is chosen. The last
estimate therefore multiplies only two cardinals less than $\beta$;
it is valid even when $\cf(\beta)=\omega$. The assumption
$\beta>\aleph_0$ cannot be omitted: the precompact neighbourhood
$(-1,1)$ in $(\mathbb R,+)$ generates the nonprecompact group
$\mathbb R$.

\section{Main results}\label{sec:permutations}

The use of uniformly discrete subsets to test the SIN property goes
back to Itzkowitz; see Hern\'andez~\cite[Lemma~2]{Hernandez00}.
That criterion characterizes SIN groups by uniform control of conjugation
on each left uniformly discrete subset. Here we obtain the required
control through local permutations on separated sets.

We first choose a continuous pseudometric at a prescribed neighbourhood.
This follows from the continuous prenorm construction
in~\cite[Theorem~3.3.9]{AT08}. We include a proof to specify the separation
estimates used below.
\begin{lemma}\label{lem:metric}
For every $A\in\N$, there are a continuous left-invariant pseudometric
$d$ on $G$ and $\varepsilon>0$ such that
$\{g:d(e,g)<\varepsilon\}\subseteq A$.
\end{lemma}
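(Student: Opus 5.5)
We use the standard Birkhoff--Kakutani construction of a continuous left-invariant pseudometric from a sequence of neighbourhoods. Choose symmetric open neighbourhoods $U_n\in\N$ with
\[
 U_0\subseteq A,\qquad U_{n+1}^3\subseteq U_n\quad(n\geq 0).
\]
Define $\rho(g)=2^{-n}$ if $g\in U_{n-1}\setminus U_n$ (with $U_{-1}=G$), and $\rho(g)=0$ if $g\in\bigcap_n U_n$. Put
\[
 N(g)=\inf\Bigl\{\sum_{i=1}^k\rho(g_i):g=g_1\cdots g_k\Bigr\},
 \qquad d(x,y)=N(x^{-1}y).
\]
Left invariance of $d$ is immediate, since $(gx)^{-1}(gy)=x^{-1}y$. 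Because each $U_n$ is symmetric, $\rho(g^{-1})=\rho(g)$, so $N(g^{-1})=N(g)$. Concatenating factorizations gives $N(gh)\leq N(g)+N(h)$. Hence $d$ is a left-invariant pseudometric.

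The key step is the Birkhoff--Kakutani chain estimate:
\[
 \rho(g_1\cdots g_k)\leq 2\sum_{i=1}^k\rho(g_i),
\]
proved by induction on $k$. Given the inequality, $N\geq\rho/2$. If $d(e,g)=N(g)<\varepsilon:=1/4$, then $\rho(g)<1/2$, so $\rho(g)\leq 1/4$ and $g\in U_0\subseteq A$. This gives the required inclusion $\{g:d(e,g)<\varepsilon\}\subseteq A$.

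For continuity, note that $N\leq\rho$, so $N(g)\leq 2^{-n-1}$ for $g\in U_n$. Therefore
\[
 |d(x,y)-d(x',y')|\leq N(x^{-1}x')+N(y^{-1}y').
\]
This shows that $d$ is continuous, and indeed left uniformly continuous, on $G\times G$.

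The main obstacle is the chain inequality. We plan to use the usual splitting argument. Let $S=\sum_i\rho(g_i)$ and $n$ the least integer with $2^{-n}\leq S$; we may assume $S<1/2$, as otherwise the bound is trivial. Choose the largest index $j$ with $\sum_{i<j}\rho(g_i)\leq S/2$. Then both $\sum_{i<j}\rho(g_i)$ and $\sum_{i>j}\rho(g_i)$ are at most $S/2$. By the induction hypothesis the two outer products have $\rho\leq S$, so they lie in $U_{n}$, and the middle factor $g_j$ also satisfies $\rho(g_j)\leq S$ and lies in $U_n$. The product therefore lies in $U_n^3\subseteq U_{n-1}$, giving $\rho\leq 2^{-n}\cdot 2\leq 2S$. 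The threshold bookkeeping is off by one, and we will adjust the indexing convention of $\rho$ so that it closes exactly.

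Since the lemma is a standard fact (see \cite[Theorem~3.3.9]{AT08}), no FSIN hypothesis is needed. The only point to record for later use is the explicit pair $(d,\varepsilon)$ with $\varepsilon=1/4$, together with the bound $d(e,g)\leq 2^{-n-1}$ on $U_n$.
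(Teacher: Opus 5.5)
Your proposal is correct once the index slip you flagged is repaired, and it is the paper's argument. Your $N$ is exactly the paper's chain-cost infimum $d(e,\cdot)$ after an index shift (your $U_{n-1}$ is the paper's $U_n$). Your estimate $\rho(g_1\cdots g_k)\le 2\sum_i\rho(g_i)$ is equivalent to the paper's invariant that a chain of cost $<2^{-n}$ has increment in $U_n$, and that strict inequality is what lets the paper's halving step close without the off-by-one you ran into.

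No change of convention is needed to repair your step. Since $\rho(g)<2^{-m}$ if and only if $g\in U_m$, the bound $\rho\le S<2^{-(n-1)}$ places both outer products and $g_j$ in $U_{n-1}$, not $U_n$. Hence the product lies in $U_{n-1}^3\subseteq U_{n-2}$, so $\rho\le 2^{-(n-1)}\le 2S$; here $n\ge 2$ because $S<1/2$. The case $S=0$ is covered by noting that $\bigcap_n U_n$ is a subgroup.
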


\begin{proof}
Put $U_0=G$. Choose a symmetric open $U_1\in\N$ contained in $A$, and
recursively choose symmetric open $U_{n+1}\in\N$ with
$U_{n+1}^3\subseteq U_n$.
An edge from $x$ to $y$ labelled $n$ has cost $2^{-n}$ and is allowed
when $x^{-1}y\in U_n$. Define $d(x,y)$ to be the infimum of the costs
of finite labelled chains from $x$ to $y$. Every pair has an edge
labelled $0$, so $d$ is finite. Reversing and concatenating chains shows
that $d$ is a pseudometric. Left translation of chains shows that it
is left invariant.

We claim that a chain of cost less than $2^{-n}$ has its total increment
in $U_n$. We prove this simultaneously for all $n\ge0$ by induction
on the number of edges. For a nonempty chain of total cost $s<2^{-n}$,
choose an edge containing the halfway point of its ordered sum of costs.
The chains before and after that edge each have cost at most
$s/2<2^{-(n+1)}$. Their increments belong to $U_{n+1}$ by induction.
The middle edge has cost less than $2^{-n}$, so its label is at least
$n+1$ and its increment also belongs to $U_{n+1}$.
The total increment therefore belongs to $U_{n+1}^3\subseteq U_n$.
An empty side chain has increment $e$, so it satisfies the same assertion.

It follows that $d(e,g)<1/2$ implies $g\in U_1\subseteq A$.
Conversely, $g\in U_n$ implies $d(e,g)\le2^{-n}$.
Thus $d(e,g)\to0$ as $g\to e$. The inequality
\[
 |d(x,y)-d(x',y')|\le d(x,x')+d(y,y')
\]
and left invariance now give joint continuity of $d$.
\end{proof}

\begin{definition}
Let $G$ be a group and $U\in \N$. A subet $B\subseteq G$ is called {\it $U$-disjoint} if $b_1U\cap b_2U=\emptyset $ for each $b_1\neq b_2$ in $B$.
\end{definition}

\begin{lemma}\label{Lepermutation}
Let $G$ be topological group and $U,V\in \N$ with $U,V$ being symmetric and $A\subseteq G$ a $V$-disjoint set such that $UA\subseteq AV$. For each $u\in U$ defined $\phi_u:A\rightarrow A$ by $\phi_u(a)=a'$ with $ua=a'v$, where $a'$ is the unique element of $A$ satisfying $ua=a'v$. Then $\phi_u$ is a permutation of $A$ such that $\phi_u^{-1}=\phi_{u^{-1}}$. Denote by $\Gamma(U,V)$ a subgroup of the permutation group $\Sym(A)$ generated by the set $\{\phi_u:u\in U\}$. Then $\Gamma(U',V')\subseteq \Gamma(U,V)$ whenever $U'\subseteq U$, $V'\subseteq V$ and $U'A\subseteq AV'$.
\end{lemma}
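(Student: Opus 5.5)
The plan is a direct verification in three steps: first that $\phi_u$ is well defined, then that it is a bijection with $\phi_u^{-1}=\phi_{u^{-1}}$, and finally the monotonicity of $\Gamma(U,V)$. One point must be made explicit at the outset. The hypothesis is $UA\subseteq AV$ with $V$ symmetric, and the natural reading of $V$-disjointness compatible with this representation is that the sets $aV$, $a\in A$, are pairwise disjoint. Hence for $u\in U$ and $a\in A$ we have $ua\in AV$, so $ua=a'v$ for some $a'\in A$ and $v\in V$. If also $ua=a''v''$ with $a''\in A$, $v''\in V$, then $ua\in a'V\cap a''V$, and disjointness forces $a'=a''$. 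Therefore $\phi_u(a)=a'$ is well defined. The witness $v$ need not be unique, but only $a'$ matters.

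For the inverse relation, take $a\in A$ with $ua=a'v$, so that $\phi_u(a)=a'$. Rearranging gives
\[
u^{-1}a' = a v^{-1}.
\]
Here $u^{-1}\in U$ and $v^{-1}\in V$ by symmetry. So $u^{-1}a'$ is written as an element of $A$ times an element of $V$, and by the uniqueness just proved, $\phi_{u^{-1}}(a')=a$. Thus $\phi_{u^{-1}}\circ\phi_u=\mathrm{id}_A$. Exchanging the roles of $u$ and $u^{-1}$ gives $\phi_u\circ\phi_{u^{-1}}=\mathrm{id}_A$. Hence $\phi_u$ is a permutation of $A$ with $\phi_u^{-1}=\phi_{u^{-1}}$. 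In particular the generating set $\{\phi_u:u\in U\}$ is closed under inverses, and $\Gamma(U,V)$ is a well-defined subgroup of $\Sym(A)$.

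For the monotonicity, let $U'\subseteq U$ and $V'\subseteq V$ with $U'A\subseteq AV'$. We implicitly take $U'$ and $V'$ symmetric, so that $\Gamma(U',V')$ is defined by the first part. The set $A$ is also $V'$-disjoint, since $aV'\subseteq aV$. Write $\phi'_u$ for the permutation defined via $(U',V')$. For $u\in U'$ and $a\in A$ we have $ua=a'v'$ with $v'\in V'\subseteq V$ and $u\in U$. By uniqueness relative to $V$, this gives $\phi'_u(a)=\phi_u(a)$. Hence every generator of $\Gamma(U',V')$ is a generator of $\Gamma(U,V)$, and $\Gamma(U',V')\subseteq\Gamma(U,V)$.

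The only delicate point is the disjointness convention. With the literal definition, $b_1U\cap b_2U=\emptyset$ refers to the same $V$ as the one appearing in $UA\subseteq AV$, so uniqueness follows immediately as above. Beyond this everything is routine, and I expect no real obstacle.
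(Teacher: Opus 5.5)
Your proof is correct and follows the paper's argument: uniqueness of $a'$ from $V$-disjointness, then $u^{-1}a'=av^{-1}$ giving $\phi_{u^{-1}}\circ\phi_u=\mathrm{id}_A$, and the same with $u$ and $u^{-1}$ exchanged (the paper also proves surjectivity separately, which your two-sided inverse makes redundant). Your monotonicity step, that $\phi'_u=\phi_u$ for $u\in U'$ because $a'V'\subseteq a'V$ and the centre relative to $V$ is unique, is also right, and it supplies a proof of the lemma's final clause, which the paper's written proof does not address.
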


\begin{proof}
We first show that $\phi_u$ is well-defined. For fixed $u\in U$ and $a\in A$, the condition $UA\subseteq AV$ ensures that $ua\in AV$, so there exist $a'\in A$ and $v\in V$ with $ua=a'v$. Suppose there are two such pairs: $ua=a_1v_1=a_2v_2$ with $a_i\in A$, $v_i\in V$. Then $a_1v_1=a_2v_2$, hence $a_1V\cap a_2V\neq\varnothing$. Since $A$ is $V$-disjoint, we must have $a_1=a_2$. Thus $a'$ is uniquely determined, so $\phi_u(a)$ is well-defined.

Next we prove that $\phi_u$ is bijective. We will construct its inverse explicitly. Since $U$ is symmetric, $u^{-1}\in U$. Applying the same well-definedness to $u^{-1}$, for every $b\in A$ there exist $a\in A$ and $w\in V$ such that
\[
u^{-1}b = a w.
\]
Multiplying both sides on the left by $u$ gives
\[
b = u a w.
\]
Rearranging, $u a = b w^{-1}$. Because $V$ is symmetric, $w^{-1}\in V$. By the definition of $\phi_u$, the relation $u a = b w^{-1}$ with $w^{-1}\in V$ implies $\phi_u(a)=b$. Hence for every $b\in A$ there exists $a\in A$ with $\phi_u(a)=b$; so $\phi_u$ is surjective.

Now take any $a\in A$ and put $a'=\phi_u(a)$. Then $ua=a'v$ for some $v\in V$. We have
\[
u^{-1}a' = a v^{-1}.
\]
Since $v^{-1}\in V$, the definition of $\phi_{u^{-1}}$ applied to the element $a'$ yields
\[
\phi_{u^{-1}}(a') = a.
\]
Thus $\phi_{u^{-1}}(\phi_u(a))=a$ for every $a\in A$, so $\phi_{u^{-1}}\circ \phi_u = \mathrm{id}_A$. Replacing $u$ by $u^{-1}$ in the above argument gives $\phi_u\circ \phi_{u^{-1}} = \mathrm{id}_A$. Therefore $\phi_u$ is a bijection and
\[
\phi_u^{-1} = \phi_{u^{-1}}.
\]
This completes the proof.

\end{proof}

{\bf Marker Environment $\mathcal{M}(G,O)$:} Let $G$ be a Hausdorff FSIN group, fix $O\in\N$.

{\bf 1. Outer scale layer:}\label{Mer1}
 Choose a symmetric open $S\in\N$ with $S^3\subseteq O$.
By Lemma~\ref{lem:metric}, choose $d$ and $r>0$ such that, on putting
$B_t=\{g:d(e,g)<t\}$, we have
\begin{equation}\label{eq:scale}
 B_{4r}\subseteq S,\qquad S^{-1}B_rS\subseteq O.
\end{equation}
Choose $C\subseteq G$ maximal subject to
$d(c,c')\ge4r$ for distinct $c,c'\in C$.
Maximality gives
\begin{equation}\label{eq:net}
 G=CB_{4r}.
\end{equation}
This layer defines the objects:
\[
S,\quad d,\quad r,\quad C.
\]

{\bf 2. Packing layer:}
Choose arbitrary symmetric open neighbourhoods $P\in\N$ and $Q\in\N$, and a nonempty set $D\subseteq P$ satisfying the packing condition:
\begin{equation}\label{eq:packing}
 D \text{~is~} Q\text{-disjoint and contained in P},
 \qquad PQ^2P^{-1}\subseteq B_r,
\end{equation}
where $B_r$ is from Outer scale layer. We call {\it $D$ a packing at $(Q,P)$}.

{\bf 3. Inner neighbourhood $L$:} Choose a symmetric $R_D\in\N$
contained in $Q$. By Theorem \ref{thm:neutral}, choose a symmetric
$L\in\N$ contained in $P$ such that $LD\subseteq DR_D$. We call {\it Inner neighbourhood $L$ induced by $(D,Q,P)$}
.
By Lemma \ref{Lepermutation}, each $h\in L$ determines a permutation $\eta_h$ on $D$ such that
\begin{equation}\label{eq:inner}
 ha\in\eta_h(a)R_D\qquad(h\in L,\ a\in D).
\end{equation}

{\bf 4. Outer source $W$ and outer group $\Gamma$:} For the symmetric open $L$ from the previous layer.  By Theorem \ref{thm:neutral}, the left neutrality of
$C$ gives a symmetric open $W\in\N$ such that $WC\subseteq CL$.
By Lemma \ref{Lepermutation}, each $u\in W$ determines a permutation $\varphi_u$ on $C$ such that
\begin{equation}\label{eq:outer}
 uc\in\varphi_u(c)L.
\end{equation}
By Lemma \ref{Lepermutation}, we may put
\begin{equation}\label{eq:outergroup}
 \Gamma(W,L)=\langle\varphi_u:u\in W\rangle\le\Sym(C).
\end{equation}
such that
\begin{equation}\label{eq:subgroup}
 \Gamma(W',L')\le \Gamma(W,L),
\end{equation}
 whenever $W'\subseteq W$, $L'\subseteq L$ and $W'D\subseteq DL'$.

\begin{lemma}\label{lem:markers}
Retain a marker environment $\mathcal{M}(G,O)$ as above. Let $\tau:C\to I$ be a colouring and let $I_*\subseteq I$.
Suppose there are nonempty subsets $A_i\subseteq D$, $i\in I_*$, such that
\begin{equation}\label{eq:independent}
 A_j\ne\eta_h(A_i)\qquad(i\ne j,\ i,j\in I_*,\ h\in L).
\end{equation}
There is a symmetric open $E\in\N$ contained in $W$ such that
\[
 \tau(\varphi_u(c))=\tau(c)
 \qquad(u\in E,\ \tau(c)\in I_*).
\]
If $I_*=I$ and $\tau$ distinguishes $\Gamma(W,L)$, then $O$ contains
a conjugation-invariant open identity neighbourhood.
\end{lemma}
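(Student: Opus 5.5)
The plan is to encode the colouring $\tau$ into a single subset of $G$ and then apply left neutrality (Theorem~\ref{thm:neutral}) to that subset. Put
\[
 M=\bigcup\{cA_{\tau(c)}: c\in C,\ \tau(c)\in I_*\}\subseteq CD .
\]
By left neutrality of $M$, choose a symmetric open $R'\in\N$ with $R'\subseteq R_D\subseteq Q$. Then choose a symmetric open $E\in\N$ with $E\subseteq W$ and $EM\subseteq MR'$.

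Fix $u\in E$ and $c\in C$ with $\tau(c)=i\in I_*$, and put $c'=\varphi_u(c)$. By \eqref{eq:outer}, $uc=c'h$ for some $h\in L\subseteq P$. By \eqref{eq:inner}, for $a\in A_i$ we get $uca=c'ha\in c'\eta_h(a)R_D$, where $\eta_h(a)\in D$.

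\textbf{Step 1 (block separation).} Neutrality places $uca$ in $c''A_jR'$ for some $c''\in C$ with $j=\tau(c'')\in I_*$. I claim $c''=c'$. Indeed, $c'\eta_h(a)x=c''by$ with $b\in D\subseteq P$ and $x,y\in Q$. Hence $c'^{-1}c''\in PQ^2P^{-1}\subseteq B_r$, so $d(c',c'')<r<4r$, and the separation of $C$ forces $c''=c'$. Two consequences follow.
\begin{itemize}
\item[(a)] The block of $M$ over $c'$ is nonempty, so $\tau(c')\in I_*$.
\item[(b)] $\eta_h(a)Q\cap bQ\neq\varnothing$ for some $b\in A_j$, where $j=\tau(c')$. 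Since $D$ is $Q$-disjoint, $\eta_h(a)=b\in A_j$.
\end{itemize}
Thus $\eta_h(A_i)\subseteq A_j$.

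\textbf{Step 2 (equality).} For the reverse inclusion, apply the same argument to $u^{-1}\in E$ at the point $c'$. Lemma~\ref{Lepermutation} gives $\varphi_{u^{-1}}(c')=c$, and from $uc=c'h$ we get $u^{-1}c'=ch^{-1}$. Since the element of $C$ in \eqref{eq:outer} is unique, the witness is $h^{-1}\in L$, and Lemma~\ref{Lepermutation} applied to $D$ gives $\eta_{h^{-1}}=\eta_h^{-1}$. Step 1 therefore yields $\eta_h^{-1}(A_j)\subseteq A_i$. Combining the two inclusions, $A_j=\eta_h(A_i)$, and \eqref{eq:independent} forces $i=j$, that is, $\tau(\varphi_u(c))=\tau(c)$.

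I expect this step to be the main obstacle. The delicate points are the bookkeeping of which $L$-element witnesses $\varphi_{u^{-1}}$, and checking that $\eta_{h^{-1}}$ really inverts $\eta_h$. The latter needs $R_D$ symmetric and $D$ being $R_D$-disjoint, which holds because $R_D\subseteq Q$.

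\textbf{Step 3 (the SIN conclusion).} If $I_*=I$ and $\tau$ distinguishes $\Gamma(W,L)$, then Step 2 shows that $\varphi_u$ preserves $\tau$ for every $u\in E$. Hence $\varphi_u=\mathrm{id}$, so $uc\in cL$ for all $c\in C$.

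Now let $g\in G$ be arbitrary. By \eqref{eq:net}, $g=cs$ with $c\in C$ and $s\in B_{4r}\subseteq S$. Then
\[
 g^{-1}ug=s^{-1}(c^{-1}uc)s\in S^{-1}LS\subseteq S^{-1}B_rS\subseteq O
\]
by \eqref{eq:scale}, using $L\subseteq P\subseteq B_r$. Therefore $\bigcup_{g\in G}gEg^{-1}$ is an open, conjugation-invariant identity neighbourhood contained in $O$.
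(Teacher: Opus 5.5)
Your proposal is correct and follows essentially the same route as the paper: encode $\tau$ in $M=\bigcup_{\tau(c)\in I_*}cA_{\tau(c)}$, apply left neutrality of $M$, separate blocks via $PQ^2P^{-1}\subseteq B_r$ and the $Q$-disjointness of $D$, get the reverse inclusion from $u^{-1}c'=ch^{-1}$ and $\eta_{h^{-1}}=\eta_h^{-1}$, and finish with $G=CB_{4r}$ and $S^{-1}B_rS\subseteq O$. The only differences are cosmetic: you shrink the neutrality target from $Q$ to some $R'\subseteq R_D$, and you do the disjointness check inline rather than as a separate claim about the sets $caQ$.
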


\begin{proof} The following $P,Q,R_D$ are from Machine \ref{Mer1} for $O$.

Put
\[
 M=\bigcup_{\tau(c)\in I_*}cA_{\tau(c)}.
\]
The sets $caQ$, $(c,a)\in C\times D$, are pairwise disjoint.
Indeed, $caq=c'a'q'$ gives
\[
 c^{-1}c'=aqq'^{-1}a'^{-1}\in PQ^2P^{-1}\subseteq B_r.
\]
Separation first gives $c=c'$, and disjointness of the sets $aQ$
then gives $a=a'$.

By left neutrality of $M$, choose a symmetric open $E\in\N$ contained
in $W$ such that $EM\subseteq MQ$.
Fix $u\in E$ and $c\in C$ with $\tau(c)\in I_*$, and write
$uc=c'h$ with $c'=\varphi_u(c)$ and $h\in L$.
For $a\in A_{\tau(c)}$, formula~\eqref{eq:inner} gives
\[
 u(ca)=c'ha\in c'\eta_h(a)R_D\subseteq c'\eta_h(a)Q.
\]
The inclusion $EM\subseteq MQ$ and disjointness of all sets $caQ$
show that $\tau(c')\in I_*$ and
\[
 \eta_h(A_{\tau(c)})\subseteq A_{\tau(c')}.
\]
Here nonemptiness of $A_{\tau(c)}$ ensures that the target colour belongs
to $I_*$. Since $u^{-1}c'=ch^{-1}$, the same comparison for $u^{-1}$
gives $\eta_{h^{-1}}(A_{\tau(c')})\subseteq A_{\tau(c)}$ by Lemma \ref{Lepermutation}.
Thus the preceding inclusion is an equality, and
\eqref{eq:independent} implies $\tau(c')=\tau(c)$.

If $I_*=I$ and $\tau$ is distinguishing, every $\varphi_u$, $u\in E$,
is the identity. Hence $c^{-1}Ec\subseteq L\subseteq B_r$ for all $c\in C$.
For $g=cs$ as in~\eqref{eq:net}, we have $s\in B_{4r}\subseteq S$, so
\[
 g^{-1}Eg\subseteq S^{-1}B_rS\subseteq O.
\]
Therefore $\bigcup_{g\in G}g^{-1}Eg$ is the required open invariant
identity neighbourhood.
\end{proof}


\begin{lemma}\label{cor:gain}
Let an infinite set $D$ with $|D|=\mu$ be a packing at $(Q,P)$, that is, $D,P,Q$ satisfy \eqref{eq:packing} in Marker Environment $\mathcal{M}(G,O)$. Then for every $H\in\N$, there is a set $D'$ packing at $(E,P)$, an iner neighbourhood $L'\subseteq P\cap H$ induced by $(D',E,P)$ and a family $\{A_i:i\in I\}$ of nonempty subsets of the
set $D'$ such that
\[
  |I|>\mu,
  \qquad
  A_j\neq\eta'_h(A_i)\quad(i\neq j,\ h\in L').
\]
\end{lemma}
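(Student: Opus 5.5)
The plan is to keep the packing scale fixed, shrink the inner neighbourhood, and count orbits of subsets, splitting into two cases as in Lemma~\ref{lem:amplification}. First I reuse $D$ itself as the candidate $D'$. Choose a symmetric open $E\in\N$ with $E\subseteq Q$; then $D$ is $E$-disjoint, $D\subseteq P$, and $PE^2P^{-1}\subseteq PQ^2P^{-1}\subseteq B_r$. So $D$ is a packing at $(E,P)$ in the sense of~\eqref{eq:packing}. Take $R_{D}\subseteq E$ symmetric. By Theorem~\ref{thm:neutral}, choose a symmetric $L'\in\N$ with $L'\subseteq P\cap H$ and $L'D\subseteq DR_D$. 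Lemma~\ref{Lepermutation} then gives permutations $\eta'_h$ of $D$ for $h\in L'$, with $\eta'_{h^{-1}}=(\eta'_h)^{-1}$. Let $\Gamma'\le\Sym(D)$ be the group they generate. If two subsets lie in different $\Gamma'$-orbits on $\Pow(D)$, then they are certainly not related by a single $\eta'_h$. It therefore suffices to find more than $\mu$ distinct $\Gamma'$-orbits of nonempty subsets of $D$, and to let $\{A_i\}$ be a set of representatives.

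\emph{Case 1: every $\Gamma'$-orbit on $\Pow(D)$ has cardinality $<2^\mu$.} Repeat the counting in Lemma~\ref{lem:amplification}. Fewer than $\cf(2^\mu)$ orbits of size $<2^\mu$ have union of size $<2^\mu=|\Pow(D)|$. Hence there are at least $\cf(2^\mu)$ orbits, and $\cf(2^\mu)>\mu$ by K\"onig's theorem. Discard the orbit of $\varnothing$. What remains is a family of more than $\mu$ nonempty representatives, as required.

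\emph{Case 2: some $A\subseteq D$ has a $\Gamma'$-orbit of cardinality $2^\mu$.} This is the analogue of the first half of Lemma~\ref{lem:amplification}, and it is where I expect the main work. The plan is to replace $D$ by a larger packing $D'$ of cardinality $\lambda=2^\mu$ built from this orbit. Concretely, choose elements $h_\gamma\in\langle L'\rangle$ realising $2^\mu$ distinct images $\gamma A$. Then take $D'$ to be points of the form $h_\gamma a_\gamma$, suitably shrunk back into $P$. The shrinking should use the open stabilizer given by left neutrality of $AR_D$, which is the argument of Lemma~\ref{lem:stabilizer}. Finally choose a new $E'\subseteq E$ so small that $D'$ is $E'$-disjoint; distinct images $\gamma A$ are separated at scale $R_D$ by the disjointness of the sets $aQ$.

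Given such a $D'$, take a new inner neighbourhood $L''\subseteq P\cap H$ induced by $(D',E',P)$. Its group acts on $D'$ with cardinality at most $|\Sym(D')|=2^\lambda$. Each orbit on $\Pow(D')$ then has size at most $2^\lambda$. As in the proof of Lemma~\ref{lem:amplification}, the number of orbits must therefore be $2^{2^\lambda}>\mu$. Alternatively, Case 1 applied to $D'$ already gives more than $|D'|>\mu$ orbits. The delicate points are keeping $D'$ inside $P$ and $E'$-disjoint. I would first try to arrange this by choosing the $h_\gamma$ inside $L'\subseteq P$ and taking the points $h_\gamma a_0$ for a fixed $a_0\in A$, replacing $P$ by a smaller symmetric neighbourhood if necessary.
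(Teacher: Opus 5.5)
\paragraph{Verdict.} Your reduction to counting orbits is sound, and so is your Case~1. However, Case~2 has a genuine gap: the orbit count fails because the group acting on the new packing $D'$ is not shown to be small.

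\paragraph{Where Case~2 breaks.} You bound the group generated by the new local permutations on $D'$ only by $|\Sym(D')|=2^\lambda$. But $|\Pow(D')|=2^\lambda$ as well, so orbits of size at most $2^\lambda$ give no lower bound on the number of orbits. The claimed ``$2^{2^\lambda}$ orbits'' is impossible, since it exceeds $|\Pow(D')|$. For instance, the orbits of the full symmetric group on $\Pow(\lambda)$ are determined by the cardinalities of a set and of its complement. If $\mu=\aleph_0$ and $2^{\aleph_0}=\aleph_1$, there are only countably many such orbits, which is not more than $\mu$. Your fallback, ``Case~1 applied to $D'$'', assumes $D'$ falls into Case~1. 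If it falls into Case~2 again, you are led into an iteration with no termination.

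\paragraph{The missing idea.} The new packing must carry a \emph{small} group, and the paper obtains this as follows.
\begin{enumerate}
\item Split on the single-step image families $\Sigma B=\{\eta_h(B):h\in L\}$, not on orbits of the generated group. This suffices because the required condition only involves single $\eta'_h$. With your split, the elements $h_\gamma$ lie in $\langle L'\rangle$ and need not lie in $P$; ``suitably shrunk back into $P$'' is not an argument.
\item In Case~2, take $D'=T=\{t_Y:Y\in\Sigma B\}$, where $t_Y\in L$ satisfies $\eta_{t_Y}(B)=Y$. The group elements themselves form the packing, so no points $h_\gamma a_0$ are needed and $T\subseteq L\subseteq P$ holds automatically.
\item Choose $E\subseteq L\cap Q$ with $\eta_v(B)=B$ for $v\in E$; this is your stabilizer idea. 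It makes the sets $t_YE$ pairwise disjoint, and $PE^2P^{-1}\subseteq B_r$.
\item For $j$ in the new inner neighbourhood $J\subseteq E\cap H$, the relation $\xi_j(t_Y)=t_Z$ forces $\eta_j(Y)=Z$. So $t_Y\mapsto Y$ conjugates $\langle\xi_j:j\in J\rangle$ onto the action of $\langle\eta_j:j\in J\rangle\le\Sym(D)$ on $\Sigma B$. Hence $|\langle\xi_j\rangle|\le|\Sym(D)|=2^\mu$.
\end{enumerate}
Now the count works. Orbits on $\Pow(T)$ have size at most $2^\mu$, while $|\Pow(T)|=2^{2^\mu}$. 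Therefore there are $2^{2^\mu}>\mu$ orbits, and their nonempty representatives give the required family.
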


\begin{proof}
Put $\lambda=2^\mu$. Choose a symmetric open neighbourhood $V_D$
such that $V_D^2\subseteq Q$. By left neutrality of $D$, there is a
symmetric open neighbourhood $L\in \N$ with $L\subseteq H\cap P$ such that
$LD\subseteq DV_D$. Then $L$ is an iner neighbourhood induced by $(D,Q,P)$. By Lemma \ref{Lepermutation}, the local permutations $\eta_h\in\Sym(D)$ can be
defined by
\begin{equation}
  hd\in\eta_h(d)V_D\qquad(h\in L,\ d\in D).
  \label{eq:old-permutations}
\end{equation}
Put
\[\Sigma=\{\eta_h:h\in L\}
\] and, for nonempty set $B\subseteq D$, put
\[\Sigma B=\{\eta_h(B):h\in L\}.\]
 We have
$\Sigma^{-1}=\Sigma$ and $\eta_e=\mathrm{id}_D$. Moreover,
$|\Sigma B|\leq|\Sym(D)|=\lambda$ for every $B\subseteq D$.

{\bf Case 1:} Suppose that $|\Sigma B|<\lambda$ for every nonempty $B\subseteq D$.

We choose nonempty set $A_\alpha\subseteq D$ recursively for $\alpha<\cf(\lambda)$.
At stage $\alpha$, the set
\[
  \bigcup_{\gamma<\alpha}\Sigma A_\gamma
\]
has cardinality less than $\lambda$. Indeed, it is the union of fewer
than $\cf(\lambda)$ sets, each of cardinality less than $\lambda$.
We can therefore choose a nonempty set $A_\alpha\subseteq D$ outside this
union. The required family
with $D'=D$, $L'=L$, $E=Q$ and $I=\cf(\lambda)$. Since $\Sigma^{-1}=\Sigma$,
the resulting family satisfies the
required separation condition in both directions. The inequality $\cf(2^\mu)>\mu$ is a consequence of K\"onig's theorem; see
\cite[Corollary~5.12]{Jech03}.

{\bf Case 2:} Suppose that $|\Sigma B|=\lambda$ for some nonempty $B\subseteq D$.

By left neutrality of $B$, choose a symmetric open neighbourhood
$E\subseteq L\cap Q$ such that $EB\subseteq BV_D$.
For $v\in E$ and $b\in B$, comparison with
\eqref{eq:old-permutations} inside the pairwise disjoint sets $dQ$
gives $\eta_v(b)\in B$. Thus $\eta_v(B)\subseteq B$.
Applying the same argument to $v^{-1}$ and using
$\eta_{v^{-1}}=\eta_v^{-1}$, we obtain
\begin{equation}
  \eta_v(B)=B\qquad(v\in E).
  \label{eq:subset-stabilized}
\end{equation}

For each $Y\in\Sigma B$, choose $t_Y\in L$ with
$\eta_{t_Y}(B)=Y$, and put $T=\{t_Y:Y\in\Sigma B\}$.
Distinct sets $Y$ have distinct representatives, so $|T|=\lambda$.
We show that the sets $t_YE$ are pairwise disjoint. Suppose that
$t_Yv=t_Zw$, where $v,w\in E$. For each $b\in B$, two applications
of \eqref{eq:old-permutations} give
\[
  t_Yvb\in\eta_{t_Y}\eta_v(b)V_D^2,
  \qquad
  t_Zwb\in\eta_{t_Z}\eta_w(b)V_D^2.
\]
The two left-hand sides are equal. Since $V_D^2\subseteq Q$ and the
sets $dQ$ are pairwise disjoint, their centres are equal. Taking
the images of $B$ and using \eqref{eq:subset-stabilized}, we obtain
$Y=Z$. Also, $T\subseteq L\subseteq P$ and
\[
  PE^2P^{-1}\subseteq PQ^2P^{-1}\subseteq B_r.
\]

By left neutrality of $T$, choose a symmetric open neighbourhood
$J\subseteq E\cap H_0$ such that $JT\subseteq TE$. By Lemma \ref{Lepermutation}, the local permutations $\xi_j$ on $T$ can be
defined by
\begin{equation}
  jt\in\xi_j(t)E\qquad(j\in J,\ t\in T).
  \label{eq:old-permutations}
\end{equation}

We next compare these permutations with the permutations of $D$.
Suppose that $\xi_j(t_Y)=t_Z$. There is $v\in E$ such that
$jt_Y=t_Zv$. For $b\in B$, equation
\eqref{eq:old-permutations} gives
\[
  jt_Yb\in\eta_j\eta_{t_Y}(b)V_D^2,
  \qquad
  t_Zvb\in\eta_{t_Z}\eta_v(b)V_D^2.
\]
All permutations in these expressions are defined, since
$j,v,t_Y,t_Z\in L$. Equality of the left-hand sides and disjointness
of the sets $dQ$ imply
\[
  \eta_j\eta_{t_Y}(b)=\eta_{t_Z}\eta_v(b).
\]
Taking the images of $B$, we conclude that
\begin{equation}
  \eta_j(Y)=Z.
  \label{eq:permutation-comparison}
\end{equation}

Let $\theta:T\longrightarrow\Sigma B$ be the bijection defined by
$\theta(t_Y)=Y$. Equation \eqref{eq:permutation-comparison} shows
that $\theta\xi_j\theta^{-1}$ is the restriction to $\Sigma B$ of
the action of $\eta_j$ on $\Pow(D)$. Since $J$ is symmetric, these
restrictions and their inverses preserve $\Sigma B$. Put
$\Xi=\langle\xi_j:j\in J\rangle$. Therefore
$\Xi$ is isomorphic to the image of $\langle\eta_j:j\in J\rangle$
under its action on $\Sigma B$, and
\[
  |\Xi|\leq|\Sym(D)|=\lambda.
\]
Every $\Xi$-orbit on $\Pow(T)$ consequently has cardinality at most
$\lambda$. There are $2^\lambda$ such orbits: if their number were
$p<2^\lambda$, then
\[
  |\Pow(T)|\leq p\cdot\lambda
  \leq\max\{p,\lambda\}<2^\lambda,
\]
a contradiction. Removing the orbit of the empty set does not
change this cardinality.

Choose one representative from each nonempty $\Xi$-orbit on
$\Pow(T)$. These representatives are separated by every $\xi_j$,
and there are $2^{2^\mu}>\mu$ of them. The required family with $D'=T$, $L'=J$, $E$ and $I=2^{2^\mu}$.
proves the assertion.
\end{proof}

We retain the pseudometric $d$, the number $r$, and the set $C$ from
Outer scale layer in Marker Environment $\mathcal{M}(G,O)$. If an open subgroup $K$ is $\kappa$-narrow, then
Lemma~\ref{lem:permanence} shows that each double coset $KxK$ is left
$\kappa^+$-precompact. Cover it by at most $\kappa$ left translates of
$B_r$. Each translate has $d$-diameter less than $2r$ and therefore
contains at most one point of $C$. Consequently,
\begin{equation}\label{eq:doublecoset}
 |C\cap KxK|\leq\kappa\qquad(x\in G).
\end{equation}

The next lemma fixes an open subgroup and compares its covering
cardinal with the sizes of packings in one neighbourhood.

\begin{lemma}\label{lem:scale}
Retain the set $B_r$ and the set $C$ from
Outer scale layer in Marker Environment $\mathcal{M}(G,O)$. Choose symmetric open neighbourhoods $P$ and $R$ such that
\[
 PRP^{-1}\subseteq B_r,
\]
and put $K=\langle P\rangle$ and $\beta=\ib(K)$.
Fix symmetric open neighbourhoods $L_0$ and $W_0$ satisfying
\[
 L_0\subseteq P\cap B_r,\qquad W_0\subseteq P,
 \qquad W_0C\subseteq CL_0,
\]
and put $\Gamma_0=\Gamma(W_0,L_0)$ and $d_0=\mathrm D(\Gamma_0,C)$.
Then $d_0\leq\beta$.

For every infinite cardinal $\kappa<\beta$, there is a symmetric open
neighbourhood $Q_\kappa$ and a maximal packing $D_\kappa\subseteq P$ at
$Q_\kappa$ such that $|D_\kappa|>\kappa$. The neighbourhoods defining
the inner and outer permutations can be chosen with
$L_\kappa\subseteq L_0$ and $W_\kappa\subseteq W_0$. The resulting group
$\Gamma_\kappa=\Gamma(W_\kappa,L_\kappa)$ is a subgroup of $\Gamma_0$.
\end{lemma}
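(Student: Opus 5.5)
The lemma has three parts, and I would prove them in order: the bound $d_0\le\beta$, the existence of large packings below $\beta$, and the nesting of the permutation groups.

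\textbf{The bound $d_0\leq\beta$.} I would colour $C$ by the orbits of a suitable group and then make the colouring injective inside each piece.

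First, every $\varphi_u$ with $u\in W_0$ moves $c$ inside $KcK$. Indeed, $uc=\varphi_u(c)h$ with $u\in W_0\subseteq P\subseteq K$ and $h\in L_0\subseteq P\subseteq K$, so $\varphi_u(c)=uch^{-1}\in KcK$. Hence every element of $\Gamma_0$ preserves each set $C\cap KxK$.

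By \eqref{eq:doublecoset}, applied with $\kappa=\beta$, each such set has cardinality at most $\beta$. Here $K$ is open and $\beta$-narrow by the definition of $\ib$, which is what \eqref{eq:doublecoset} needs.

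Colour $C$ by choosing, for every double coset $KxK$, an injection of $C\cap KxK$ into $\beta$, with the same colour set $\beta$ for every double coset. Let $\gamma\in\Gamma_0$ preserve this colouring. Then $\gamma$ preserves each $C\cap KxK$, and it preserves an injective colouring of that set, so it fixes the set pointwise. Thus $\gamma=\mathrm{id}$, the colouring is distinguishing, and $d_0\leq\beta$.

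\textbf{Large packings for $\kappa<\beta$.} Since $\kappa<\beta=\ib(K)$, the group $K$ is not $\kappa$-narrow. So there is $V\in\N$, which I may take symmetric and open, such that $K\neq FV$ for every $F$ with $|F|\leq\kappa$.

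I claim that $P$ itself is not left $\kappa^+$-precompact. If it were, Lemma~\ref{lem:permanence} would make $\langle P\rangle=K$ $\kappa$-narrow, which is false. So there is a symmetric open $Q'$ such that $P\not\subseteq FQ'$ whenever $|F|\leq\kappa$.

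Now choose a symmetric open $Q_\kappa$ with three properties:
\begin{itemize}
\item $Q_\kappa^2\subseteq Q'$;
\item $Q_\kappa^2\subseteq R$;
\item $Q_\kappa\subseteq P$.
\end{itemize}
The second and third give $PQ_\kappa^2P^{-1}\subseteq PRP^{-1}\subseteq B_r$, so condition \eqref{eq:packing} holds.

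By Zorn's lemma, take $D_\kappa\subseteq P$ maximal among $Q_\kappa$-disjoint subsets of $P$. For any $p\in P$, maximality gives some $d\in D_\kappa$ with $pQ_\kappa\cap dQ_\kappa\neq\varnothing$, so $p\in dQ_\kappa^2$. Hence $P\subseteq D_\kappa Q_\kappa^2\subseteq D_\kappa Q'$. By the choice of $Q'$ this forces $|D_\kappa|>\kappa$.

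\textbf{Nesting of the permutation groups.} This part is essentially bookkeeping, and I expect it to be the main obstacle.

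First I would pick $L_\kappa$ by Theorem~\ref{thm:neutral}: a symmetric open set contained in $L_0$ and satisfying $L_\kappa D_\kappa\subseteq D_\kappa R_{D_\kappa}$. Then I would pick $W_\kappa$, again by neutrality of $C$, symmetric, contained in $W_0$, and satisfying $W_\kappa C\subseteq CL_\kappa$.

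For $u\in W_\kappa$ the relation $uc\in\varphi_u(c)L_\kappa\subseteq\varphi_u(c)L_0$ identifies the permutation defined at level $\kappa$ with the one defined by $(W_0,L_0)$. Uniqueness in Lemma~\ref{Lepermutation} needs $C$ to be $L_0$-disjoint, and this holds because $L_0\subseteq B_r$ and distinct points of $C$ are at $d$-distance at least $4r$. So every generator of $\Gamma_\kappa$ lies in $\Gamma_0$, which is \eqref{eq:subgroup}, and $\Gamma_\kappa\leq\Gamma_0$.

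The delicate points are all consistency checks:
\begin{itemize}
\item $L_0$-disjointness of $C$ really requires $L_0^{-1}L_0\subseteq B_{4r}$, which follows from $L_0\subseteq B_r$, symmetry, and left invariance of $d$;
\item \eqref{eq:doublecoset} is applied at $\kappa=\beta$, including the case $\beta=\aleph_0$;
\item $Q_\kappa$ satisfies the packing inclusion.
\end{itemize}
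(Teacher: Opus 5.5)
Your proposal is correct and follows the paper's proof essentially step by step: the orbit bound via $\varphi_u(c)\in KcK$ and \eqref{eq:doublecoset}, the contrapositive of Lemma~\ref{lem:permanence} to obtain a maximal $Q_\kappa$-disjoint set in $P$ of size $>\kappa$, and uniqueness of centres in $C$ to get $\Gamma_\kappa\le\Gamma_0$. The differences are cosmetic. You colour each set $C\cap KxK$ injectively rather than each $\Gamma_0$-orbit, which works equally well since orbits lie inside these sets. Your extra requirement $Q_\kappa\subseteq P$ is harmless but unnecessary. You also check the $L_0$-disjointness of $C$ explicitly, which the paper leaves implicit.
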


\begin{proof}
For $u\in W_0$ and $c\in C$, write
\[
 uc=\varphi_u(c)\ell\qquad(\ell\in L_0).
\]
Since $u,\ell\in P\subseteq K$, we have
$\varphi_u(c)=uc\ell^{-1}\in KcK$. Thus every $\Gamma_0$-orbit through
$c$ lies in $C\cap KcK$. By \eqref{eq:doublecoset}, its cardinal is at
most $\beta$. Colour each orbit injectively, using the same set of
$\beta$ colours for all orbits. A colour-preserving element of
$\Gamma_0$ fixes every point, so $d_0\leq\beta$.

Fix an infinite cardinal $\kappa<\beta$. If $P$ were left
$\kappa^+$-precompact, Lemma~\ref{lem:permanence} would make $K$
$\kappa$-narrow. Hence there is $U_\kappa\in\N$ such that $P$ cannot
be covered by at most $\kappa$ left translates of $U_\kappa$.
Choose a symmetric open neighbourhood $Q_\kappa$ with
\[
 Q_\kappa^2\subseteq R\cap U_\kappa,
\]
and a maximal set $D_\kappa\subseteq P$ for which the translates
$dQ_\kappa$, $d\in D_\kappa$, are pairwise disjoint. Maximality gives
\[
 P\subseteq D_\kappa Q_\kappa^2
  \subseteq D_\kappa U_\kappa,
\]
so $|D_\kappa|>\kappa$. Moreover,
\[
 PQ_\kappa^2P^{-1}\subseteq PRP^{-1}\subseteq B_r.
\]
Thus $D_\kappa$ is a packing at $(Q_\kappa,P)$ in the sense of
\eqref{eq:packing}.

Choose a symmetric open neighbourhood $V_\kappa$ with
$V_\kappa^2\subseteq Q_\kappa$. By left neutrality, choose a symmetric
open neighbourhood $L_\kappa\subseteq L_0$ such that
\[
 L_\kappa D_\kappa\subseteq D_\kappa V_\kappa.
\]
Apply left neutrality to $C$ and choose a symmetric open neighbourhood
$W_\kappa\subseteq W_0$ such that
$W_\kappa C\subseteq CL_\kappa$.
Since $L_\kappa\subseteq L_0$, uniqueness of the centre in $C$ shows
that every new outer permutation agrees with the corresponding
permutation used to define $\Gamma_0$. Hence
$\Gamma_\kappa\leq\Gamma_0$ \eqref{eq:subgroup}.
\end{proof}

\begin{lemma}\label{lem:nonattainment}
Retain the data of Lemma~\ref{lem:scale}. Suppose that
$\beta>\aleph_0$ and that every infinite packing contained in the
fixed neighbourhood $P$ has cardinal less than $\beta$. Then
$d_0<\beta$.
\end{lemma}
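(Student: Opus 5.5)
We will show that every $\Gamma_0$-orbit in $C$ has cardinality at most one fixed cardinal $\sigma<\beta$. The colouring argument from the proof of Lemma~\ref{lem:scale} then gives $d_0\le\sigma<\beta$. A pointwise bound $|C\cap KcK|<\beta$ for each $c$ is not enough when $\beta$ is singular, so the bound must be uniform in $c$.

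\emph{Step 1: $P$ is left $\beta$-precompact.} Fix $U\in\N$. Choose a symmetric open $Q$ with $Q^2\subseteq R\cap U$, and a maximal $Q$-disjoint set $D\subseteq P$. Then $PQ^2P^{-1}\subseteq PRP^{-1}\subseteq B_r$, so $D$ is a packing at $(Q,P)$ in the sense of \eqref{eq:packing}. By maximality, $P\subseteq DQ^2\subseteq DU$. If $D$ is finite, then $|D|<\beta$ trivially. If $D$ is infinite, the hypothesis gives $|D|<\beta$. Since $\beta>\aleph_0$, Lemma~\ref{lem:generation} shows that $K=\langle P\rangle$ is left $\beta$-precompact.

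\emph{Step 2: uniform count inside one left coset of $K$.} The proof of Lemma~\ref{lem:generation} supplies a set $F^*$ with $K\subseteq F^*P$ and $\theta:=|F^*|<\beta$. For any $g\in G$ and $f\in F^*$, the translate $gfP$ has $d$-diameter less than $2r$. Indeed, for $p,p'\in P$ we have $p^{-1}p'\in PP^{-1}\subseteq PRP^{-1}\subseteq B_r$, because $e\in R$ and $P$ is symmetric; left invariance of $d$ then gives the diameter bound. Since distinct points of $C$ are at distance at least $4r$, each $gfP$ meets $C$ in at most one point. Hence $|C\cap gK|\le\theta$ for every $g\in G$.

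\emph{Step 3: uniform count of left cosets in a double coset.} By Theorem~\ref{thm:core}, $N=\core_G(K)$ is an open normal subgroup of $G$. Applying left $\beta$-precompactness of $K$ with target $N$ gives a fixed set $F_N$ with $|F_N|<\beta$ and $K\subseteq F_NN$. For every $c\in G$, normality of $N$ and $N\le K$ give
\[
 KcK\subseteq F_N N cK=F_N cNK=F_N cK .
\]
So $KcK$ is a union of fewer than $|F_N|+1$ left cosets of $K$. Combining with Step 2, $|C\cap KcK|\le\sigma:=\max\{|F_N|,\theta,\aleph_0\}$. This $\sigma$ is independent of $c$ and satisfies $\sigma<\beta$, because $\beta>\aleph_0$ and both $|F_N|$ and $\theta$ are below $\beta$.

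\emph{Step 4: conclusion.} As in the proof of Lemma~\ref{lem:scale}, every $\Gamma_0$-orbit through $c$ lies in $C\cap KcK$. Colour each orbit injectively using one common set of $\sigma$ colours. Any colour-preserving element of $\Gamma_0$ preserves each orbit and is injective on colours there, so it fixes every point and is the identity. Thus $d_0\le\sigma<\beta$.

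The main obstacle is the uniformity in Step 3, since a pointwise bound fails for singular $\beta$. Two facts resolve it: Theorem~\ref{thm:core} provides the open normal core $N$, and the covering set $F^*$ in Lemma~\ref{lem:generation} is chosen before the target neighbourhood. Together they make both $F_N$ and $F^*$ independent of $c$.
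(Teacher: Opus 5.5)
Your proof is correct and follows essentially the same route as the paper. Both arguments get left $\beta$-precompactness of $K$ from Lemma~\ref{lem:generation}, use the open normal core $N=\core_G(K)$ from Theorem~\ref{thm:core} to bound $|C\cap KcK|$ uniformly in $c$, and then colour the orbits injectively. The differences are only bookkeeping: you count at most $|F_N|$ left cosets of $K$ in $KcK$ and bound $|C\cap gK|$ via $K\subseteq F^*P$, while the paper counts $\lambda^2$ cosets of $N$ and bounds $|C\cap xN|$ via a cover of $K$ by translates of some $U_0\subseteq B_r$. Also, you need not reach into the proof of Lemma~\ref{lem:generation} for $F^*$, since its statement applied at target $P$ already gives such a set.
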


\begin{proof}
Fix $U\in\N$. Choose a symmetric open neighbourhood $Q$ with
$Q^2\subseteq R\cap U$, and a maximal set $D\subseteq P$ for which
the translates $dQ$, $d\in D$, are pairwise disjoint. Since
\[
 PQ^2P^{-1}\subseteq PRP^{-1}\subseteq B_r,
\]
the set $D$ is a packing at $Q$. If $D$ is infinite, the hypothesis
gives $|D|<\beta$; the same inequality holds when $D$ is finite.
Maximality yields
\[
 P\subseteq DQ^2\subseteq DU.
\]
Thus $P$ is left $\beta$-precompact. By
Lemma~\ref{lem:generation}, so is $K=\langle P\rangle$.

The subgroup $K$ is open. By Theorem~\ref{thm:core}, its normal core
$N=\core_G(K)$ is open and normal in $G$. A cover of $K$ by fewer
than $\beta$ left translates of $N$ gives
\[
 \lambda=[K:N]<\beta.
\]
Choose a symmetric open neighbourhood $U_0\subseteq B_r$ and a cover
\[
 K\subseteq F_0U_0,\qquad \rho=|F_0|<\beta.
\]
For every $x\in G$,
$xN\subseteq xK\subseteq xF_0U_0$. Each left translate of $U_0$
has $d$-diameter less than $2r$ and hence meets $C$ in at most one
point. It follows that
\[
 |C\cap xN|\leq\rho\qquad(x\in G).
\]

Let $T$ be a transversal for $K/N$. Since $N$ is normal in the
ambient group $G$, for every $c\in C$ we have
\[
 KcK=\bigcup_{a,b\in T}acbN.
\]
Every $\Gamma_0$-orbit through $c$ lies in $C\cap KcK$, as shown in
the proof of Lemma~\ref{lem:scale}. Therefore
\[
 |\Gamma_0c|\leq\lambda^2\rho.
\]
Colour each orbit injectively, using one common set of
$\max\{\aleph_0,\lambda,\rho\}<\beta$ colours. This colouring is
distinguishing, so $d_0<\beta$.
\end{proof}

\begin{theorem}\label{thm:main}
Every Hausdorff FSIN topological group is SIN.
\end{theorem}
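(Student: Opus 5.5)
Fix $O\in\N$ and build a marker environment $\mathcal M(G,O)$; the aim is to reach the final clause of Lemma~\ref{lem:markers}, namely a colouring $\tau$ of $C$ that distinguishes the outer group $\Gamma=\Gamma(W,L)$, together with families $\{A_i\}$ of nonempty subsets of an inner packing $D$ indexed by all colours and satisfying \eqref{eq:independent}. That lemma then yields a conjugation-invariant open neighbourhood inside $O$. Since $O$ is arbitrary, $G$ is SIN.

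First I choose symmetric open $P,R$ with $PRP^{-1}\subseteq B_r$, put $K=\langle P\rangle$ and $\beta=\ib(K)$, and fix $L_0,W_0$ as in Lemma~\ref{lem:scale}. This gives $d_0=\mathrm D(\Gamma_0,C)\le\beta$. I split into cases.

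\emph{Case $\beta=\aleph_0$.} Then $K$ is an open precompact subgroup, so $G$ is locally precompact, and Theorem~\ref{thm:local} finishes the proof. (Alternatively, $d_0\le\aleph_0$, and any infinite packing together with Lemma~\ref{cor:gain} gives more than $\aleph_0$ independent subsets.)

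\emph{Case $\beta>\aleph_0$.} There are two subcases.
\begin{enumerate}
\item Some infinite packing $D\subseteq P$ has $|D|=\mu\ge\beta$. Take $D$ as in \eqref{eq:packing}, with $Q=R_0$ chosen so that $Q^2\subseteq R$. Apply Lemma~\ref{cor:gain} with $H=L_0$ to obtain $D'$, $L'\subseteq L_0$ and a family of more than $\mu\ge\beta\ge d_0$ mutually $\eta'$-inequivalent nonempty subsets. Then choose $W'\subseteq W_0$ with $W'C\subseteq CL'$. The group $\Gamma(W',L')$ is a subgroup of $\Gamma_0$, so a distinguishing colouring for $\Gamma_0$ with $d_0$ colours still distinguishes it. Injecting the colours into the family and applying Lemma~\ref{lem:markers} with $I_*=I$ gives the result.
\item Every infinite packing in $P$ has size less than $\beta$. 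Then Lemma~\ref{lem:nonattainment} gives $d_0<\beta$. So $d_0\le\kappa$ for some infinite $\kappa<\beta$, for instance $\kappa=\max(d_0,\aleph_0)$. Lemma~\ref{lem:scale} supplies a packing $D_\kappa$ with $|D_\kappa|>\kappa$, and $\Gamma_\kappa\le\Gamma_0$. Apply Lemma~\ref{cor:gain} to $D_\kappa$ with $H=L_\kappa$. This yields more than $|D_\kappa|>\kappa\ge d_0$ independent subsets with inner neighbourhood $L'\subseteq L_\kappa$. Shrink $W$ so that $WC\subseteq CL'$, and finish as in subcase (i).
\end{enumerate}

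The main obstacle is the bookkeeping that makes the neighbourhoods compatible. The inner neighbourhood $L'$ produced by Lemma~\ref{cor:gain} must be the one governing the outer permutations $\varphi_u$ in Lemma~\ref{lem:markers}. The packing $D'$ (possibly the set $T$ from Case~2 of that lemma, with disjointness neighbourhood $E$) must satisfy $PE^2P^{-1}\subseteq B_r$, so that the sets $caE$ stay disjoint. I would verify that Lemma~\ref{lem:markers} only uses $D'\subseteq P$, the $E$-disjointness of $D'$, and $L'\subseteq P\cap B_r$, and I would check that $\tau$ remains distinguishing on the subgroup $\Gamma(W',L')\le\Gamma_0$. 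The latter holds because the permutations agree by uniqueness of centres, as in the last step of Lemma~\ref{lem:scale}.
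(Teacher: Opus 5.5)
Your treatment of the case $\beta>\aleph_0$ is essentially the paper's argument. Subcases (i) and (ii) are exactly the two cases of its Claim~2, followed by Lemma~\ref{cor:gain}, a shrunken $W\subseteq W_0$ with $WC\subseteq CL'$, and Lemma~\ref{lem:markers}.

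The gap is the case $\beta=\aleph_0$. By definition, $\ib(K)=\aleph_0$ only says that $K$ is $\omega$-narrow, i.e.\ $\aleph_1$-precompact: each $U\in\N$ gives a \emph{countable} cover $K\subseteq FU$. Precompactness needs \emph{finite} covers, so neither ``$K$ is precompact'' nor ``$G$ is locally precompact'' follows. For instance, in the separable Hilbert space $\ell^2$ every $P$ generates $K=\ell^2$ by connectedness, and $\ib(K)=\aleph_0$. Yet no identity neighbourhood of $\ell^2$ is precompact. Thus Theorem~\ref{thm:local} is not available here, and your first argument leaves precisely the $\omega$-narrow, non-locally-precompact groups uncovered.

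Your parenthetical alternative is the right repair, but it tacitly assumes that $P$ contains an infinite packing. Proving this is exactly where the hypothesis that $G$ is not locally precompact must enter. The paper therefore splits on local precompactness rather than on $\beta$:
\begin{itemize}
\item If $G$ is locally precompact, Theorem~\ref{thm:local} applies.
\item Otherwise $P$ is not precompact, so some $U\in\N$ admits no finite cover $P\subseteq FU$. Choose a maximal $Q$-disjoint $D\subseteq P$ with $Q^2\subseteq R\cap U$. Maximality gives $P\subseteq DQ^2\subseteq DU$, so $D$ is infinite, and $PQ^2P^{-1}\subseteq B_r$ holds. Hence $D$ is an infinite packing (this is the paper's Claim~1).
\end{itemize}
When $\beta=\aleph_0$, this packing has cardinality at least $\aleph_0=\beta\ge d_0$, and you can finish as in your subcase (i). With this step added, your proof coincides with the paper's.
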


\begin{proof}
Let $G$ be a Hausdorff FSIN group, and fix $O\in\N$.
If $G$ is locally precompact, Theorem~\ref{thm:local} applies.
Suppose that $G$ is not locally precompact.
For $O$, choose $S,d,r,C$ as in Outer scale layer of Marker Environment, and fix
$P,R,K,\beta,L_0,W_0,\Gamma_0,d_0$ as in Lemma~\ref{lem:scale}.
In particular,
\[
 K=\langle P\rangle,\qquad \beta=\ib(K),\qquad
 d_0=\mathrm D(\Gamma_0,C)\le\beta.
\]

{\bf Claim 1:} There is an infinite packing in the sense of \eqref{eq:packing} contained in $P$.

Indeed, $P$ is not precompact, so some $U\in\N$ admits no finite
cover $P\subseteq FU$. Choose a symmetric open $Q\in\N$ with
$Q^2\subseteq R\cap U$, and a maximal $D\subseteq P$ such that $D$ is $Q$-disjoint, that is,
the sets $aQ$, $a\in D$, are pairwise disjoint.
Then $P\subseteq DQ^2\subseteq DU$, so $D$ is infinite.
Also $PQ^2P^{-1}\subseteq PRP^{-1}\subseteq B_r$.
Taking a countably infinite subset of $D$ therefore gives a countably
infinite packing in the same $P$.

{\bf Claim 2:} $P$ contains an infinite packing in the sense of \eqref{eq:packing} of cardinality at least
$d_0$.

If there is one of cardinality at least $\beta$, this follows
from $d_0\le\beta$.
Otherwise every infinite packing in $P$ has cardinality less than
$\beta$. Claim 1 implies
$\beta>\aleph_0$.
By Lemma~\ref{lem:nonattainment}, $d_0<\beta$.
Put $\kappa=\max\{\aleph_0,d_0\}<\beta$.
Lemma~\ref{lem:scale} gives a packing in $P$ of cardinality
$\mu>\kappa$, as required.

Choose such a packing, of infinite cardinality $\mu\ge d_0$.
Apply Lemma~\ref{cor:gain}, for $L_0$, there is a set $D'$ packing at $(E,P)$, an iner neighbourhood $L\subseteq P\cap L_0$ induced by $(D',E,P)$ and a family $\{A_i:i\in I'\}$ of nonempty subsets of the
set $D'$ such that
\[
  |I'|>\mu,
  \qquad
  A_j\neq\eta'_h(A_i)\quad(i\neq j,\ h\in L).
\]

Choose a distinguishing colouring $\tau:C\to I$ for $\Gamma_0$ with
$|I|=d_0$, and assign to its colours distinct members of this family.
By left neutrality of $C$, choose a symmetric open $W\in\N$
contained in $W_0$ such that $WC\subseteq CL$.
Since $L\subseteq L_0$, formula~\eqref{eq:subgroup} gives
\[
 \Gamma(W,L)\le\Gamma_0.
\]
The same colouring therefore distinguishes $\Gamma(W,L)$.
Lemma~\ref{lem:markers}, with $I_*=I$, now gives an open
conjugation-invariant identity neighbourhood contained in $O$.
Since $O$ was arbitrary, $G$ is SIN.
\end{proof}

\begin{corollary}\label{cor:classes}
For Hausdorff topological groups,
\[
 [\mathrm{SIN}]=[\mathrm{SFSIN}]=[\mathrm{FSIN}].
\]
\end{corollary}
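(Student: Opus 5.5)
The corollary follows from Theorem~\ref{thm:main} together with the chain of inclusions recorded in the introduction. I will prove the two inclusions $[\mathrm{SIN}]\subseteq[\mathrm{SFSIN}]$ and $[\mathrm{SFSIN}]\subseteq[\mathrm{FSIN}]$ directly from the definitions, and then close the chain with $[\mathrm{FSIN}]\subseteq[\mathrm{SIN}]$.

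For the first inclusion, suppose $G$ is SIN. Then the left and right uniformities of $G$ are the same uniformity. Hence every real-valued function that is left uniformly continuous is right uniformly continuous, bounded or not, so $G$ is SFSIN.

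For the second inclusion, the SFSIN condition applies to all left uniformly continuous real-valued functions. It therefore applies in particular to the bounded ones, so every SFSIN group is FSIN.

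It remains to show that every Hausdorff FSIN group is SIN. This is exactly Theorem~\ref{thm:main}, which rests on Theorem~\ref{thm:core}, Lemma~\ref{lem:markers} and Lemma~\ref{cor:gain}. With it the chain closes:
\[
 [\mathrm{SIN}]\subseteq[\mathrm{SFSIN}]\subseteq[\mathrm{FSIN}]\subseteq[\mathrm{SIN}],
\]
so all three classes coincide among Hausdorff topological groups. Nothing here is an obstacle beyond Theorem~\ref{thm:main} itself; the only check needed is that the Hausdorff standing assumption matches the hypothesis of that theorem, and it does.
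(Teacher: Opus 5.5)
Your proposal is correct and follows the paper's proof: the inclusions $[\mathrm{SIN}]\subseteq[\mathrm{SFSIN}]\subseteq[\mathrm{FSIN}]$ come directly from the definitions, and Theorem~\ref{thm:main} supplies the reverse inclusion $[\mathrm{FSIN}]\subseteq[\mathrm{SIN}]$ for Hausdorff groups. You spell out the two easy inclusions a little more than the paper does, but the argument is the same.
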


\begin{proof}
The definitions give
$[\mathrm{SIN}]\subseteq[\mathrm{SFSIN}]\subseteq[\mathrm{FSIN}]$.
Theorem~\ref{thm:main} gives the reverse inclusion.
\end{proof}

Thus both versions of the Itzkowitz problem discussed
in~\cite{BT07} have affirmative answers.

\end{document}